\documentclass{elsarticle} 

\usepackage{amsmath, amssymb, amsfonts, amsthm}
\usepackage{graphicx}
\usepackage{epstopdf} 

\usepackage{caption}
\usepackage{subcaption} 
\usepackage{float}      

\usepackage{algorithm}
\usepackage{algpseudocode}

\usepackage[colorlinks=true, allcolors=blue, pdftex]{hyperref}

\theoremstyle{plain} 
\newtheorem{theorem}{Theorem}[section]
\newtheorem{lemma}[theorem]{Lemma}

\theoremstyle{definition} 

\theoremstyle{remark} 
\newtheorem{remark}[theorem]{Remark}

\renewcommand{\mathbf}[1]{\boldsymbol{#1}}
\newcommand{\dd}{\mathrm{d}}
\newcommand{\ii}{\mathrm{i}}

\hypersetup{
    pdfauthor={Lifu Song, Tingyue Li, Jin Cheng},
    pdftitle={A Robust Learning-Based Method for the Helmholtz Equation in Dissipative Media and Complex Domains}
}

\begin{document}
\begin{frontmatter}

\title{A Robust Learning-Based Method for the Helmholtz Equation in Dissipative Media and Complex Domains}

\author[a]{Lifu Song}
\author[b]{Tingyue Li}
\author[a]{Jin Cheng\corref{cor1}}
\cortext[cor1]{Corresponding author}
\ead{jcheng@fudan.edu.cn} 

\address[a]{School of Mathematical Sciences, Fudan University, Shanghai 200433, China}
\address[b]{School of Mathematics, Shanghai University of Finance and Economics, Shanghai 200433, China}

\begin{abstract}
To mitigate pollution effects in high-frequency Helmholtz problems, Learning-based Numerical Methods (LbNM) reconstruct solution operators using complete systems of exact solutions. However, the previously used fundamental-solution (FS) basis suffers from instability in dissipative media and requires sensitive geometric tuning. In this paper, we propose a robust alternative using a Bessel basis (BB). From a learning theory perspective, the BB forms a complete hypothesis space of standing waves, ensuring immunity to dissipation-induced signal loss. We establish a convergence result that depends on intrinsic regularity. Numerical experiments demonstrate that the proposed method achieves machine-precision accuracy in dissipative regimes where FS fails, significantly outperforms the Finite Element Method (FEM) in efficiency, and demonstrates the framework's geometric extensibility via a multi-center strategy.
\end{abstract}

\begin{keyword}
Helmholtz equation \sep Learning-based method \sep Bessel basis \sep Dissipative media \sep High frequency

\end{keyword}

\end{frontmatter}

\section{Introduction}
\label{sec:intro}

The Helmholtz equation is a foundational model in wave propagation phenomena, essential to fields ranging from acoustics and electromagnetics to geophysics. A significant computational challenge arises in the high-frequency regime, particularly when the medium exhibits energy dissipation \cite{IsakovLu2018,Wang2024Stability}. Such conditions are prevalent in engineering applications, including ultrasound in biological tissue \cite{Cobbold2006Ultrasound}, acoustic absorbing liners in aeroacoustics \cite{Nayfeh1975Aero}, and seismic wave propagation in viscoelastic media \cite{Carcione2014Seismic}.

Traditional numerical methods, such as the Finite Element Method (FEM), face severe difficulties in this regime due to the pollution effect. To maintain accuracy as the wavenumber $k$ grows, the mesh density must increase super-linearly, leading to prohibitive computational costs \cite{Ainsworth2004Dispersion, Babuskasauter1997Pollution,  IhlenburgBabushka1997HP}. 

To mitigate these issues, the Learning-based Numerical Method (LbNM) \cite{Chen2025} has been proposed. The core idea is to reconstruct the solution operator by learning from a hypothesis space spanned by a set of exact solutions. The performance of the method is thus critically dependent on the quality and robustness of this hypothesis space. In our previous work \cite{Chen2025}, we utilized a basis of fundamental solutions (FS). While fundamental solutions constitute a complete system, they exhibit inherent physical limitations when applied to dissipative media or complex geometries. The fundamental solution $H_0^{(1)}(kr)$ represents an outgoing wave. In a damped medium with a complex wavenumber $\text{Im}(k) > 0$, this kernel decays exponentially with distance. 

\begin{figure}[htbp]
    \centering
    \includegraphics[width=0.6\textwidth]{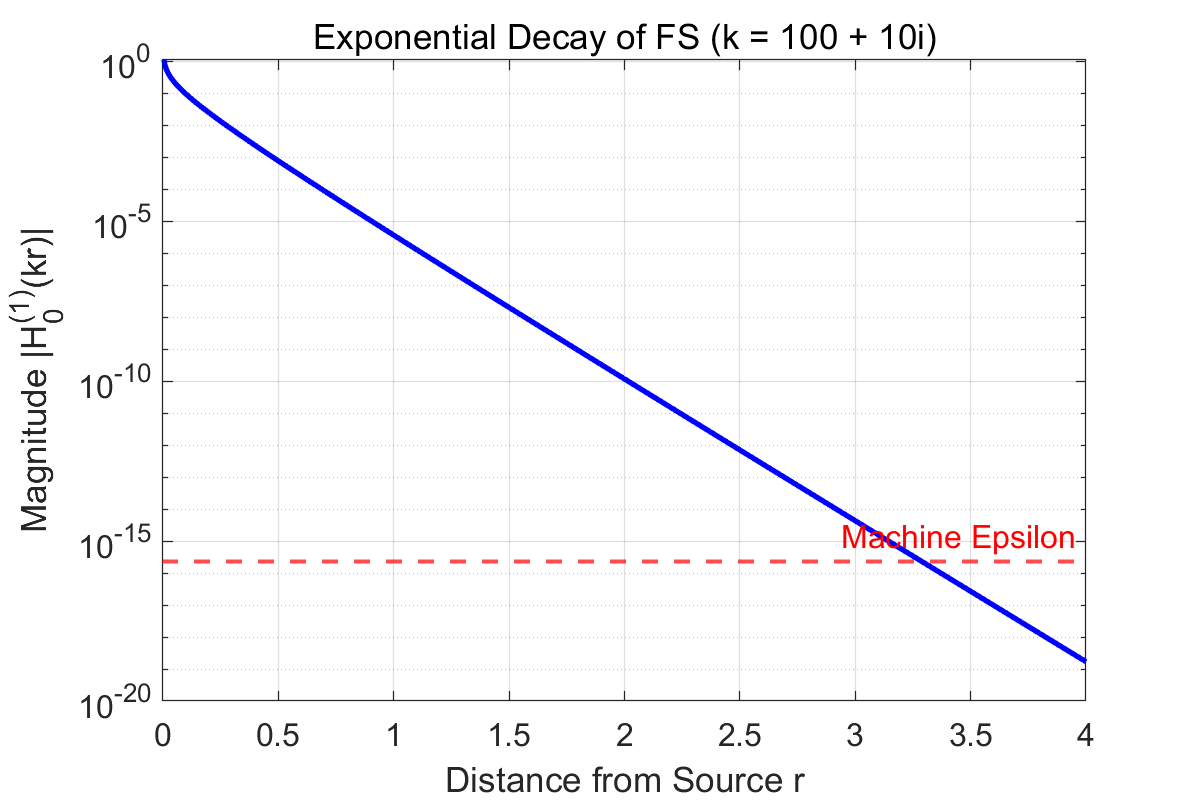}
    \caption{Exponential decay of the fundamental solution $H_0^{(1)}(kr)$ in a dissipative medium ($k=100+10\mathrm{i}$).}
    \label{fig:decay_fs}
\end{figure}

As illustrated in Figure \ref{fig:decay_fs}, for a high-frequency damped wave ($k=100+10\mathrm{i}$), the magnitude of the kernel drops below machine precision within a short distance. This imposes a severe restriction on the source placement. Placing points far from the boundary leads to numerical underflow and ill-conditioning due to the kernel's decay, while placing them close to the boundary results in slow convergence \cite{Chen2025}. Furthermore, for complex engineering geometries, the reliance on a fixed source configuration (typically a circle) creates a geometric mismatch, resulting in an uneven approximation capability across the domain.

This paper addresses these limitations by proposing the Bessel basis (BB) that is intrinsically robust to both physical dissipation and geometric irregularity. Unlike fundamental solutions, Bessel functions represent global standing waves centered at the origin. The advantages of the proposed method are summarized as follows:

\begin{enumerate}
    \item Robustness in Dissipative Media: We demonstrate that the Bessel basis avoids the exponential signal loss inherent to fundamental solutions in damped media. The BB-LbNM maintains high accuracy even in highly dissipative environments where the FS-LbNM fails due to numerical underflow.
    
    \item High-Frequency Efficiency and Accuracy: Compared to the Finite Element Method, the BB-LbNM effectively mitigates the pollution effect. By utilizing exact solutions of the governing equation, the BB-LbNM achieves machine-precision accuracy with significantly lower computational cost, making it ideal for high-frequency simulations.
    
    \item Intrinsic Geometric Adaptivity: The Bessel basis eliminates the need for configuring external source poles. This allows the BB-LbNM to adapt naturally to complex, non-convex geometries where standard FS configurations are inefficient.

    \item Extensibility to Extreme Geometries: We demonstrate that the framework can be extended to non-star-shaped domains where a standard single-center expansion fails. This failure is caused by the drastic difference in distances from the center to the boundary, which makes the basis functions numerically unstable. We show that a multi-center strategy effectively solves this problem.
\end{enumerate}

The remainder of this paper is organized as follows. Section \ref{sec:methodology} details the LbNM framework with the Bessel basis. Section \ref{sec:theory} provides the convergence analysis. Section \ref{sec:numerics} presents comprehensive numerical experiments comparing the proposed method against FS-LbNM and FEM benchmarks, and further explores a multi-center strategy for extreme geometries. Section \ref{sec:conclusion} provides concluding remarks.

\section{The Learning-Based Method with a Bessel basis}
\label{sec:methodology}

We consider the interior Dirichlet problem for the Helmholtz equation in a bounded, piecewise-smooth domain $\Omega \subset \mathbb{R}^2$:
\begin{equation}
\left\{ \begin{array}{ll} 
\Delta u(x) + k^2 u(x) = 0, & x \in \Omega, \\
u(x) = f(x), & x \in \partial\Omega,
\end{array} \right.
\label{eq:helmholtz}
\end{equation}
where $k \in \mathbb{C}$ is the complex wavenumber with $\text{Im}(k) \ge 0$, representing a potentially dissipative medium. We assume $k^2$ is not a Dirichlet eigenvalue of $-\Delta$ on $\Omega$ to ensure well-posedness.

\subsection{The General Learning Algorithm}
The LbNM seeks to construct a discrete approximation of the solution operator that maps the boundary function $f$ to the solution $u(x)$ at any target point $x \in \Omega$. Let $\{x_j\}_{j=1}^N$ be a set of collocation points on the boundary $\partial\Omega$. The boundary data is represented by the vector $\mathbf{f} = (f(x_1), \dots, f(x_N))^\top$. The numerical solution at a point $x$ is then approximated by a linear combination of these boundary values:
\begin{equation*}
    u(x) \approx u^{(c)}(x) = \sum_{j=1}^N a_j(x) f(x_j) = \mathbf{a}(x) \mathbf{f},
\end{equation*}
where $\mathbf{a}(x) = (a_1(x), \dots, a_N(x))$ is the vector of operator coefficients for the target point $x$.

To determine $\mathbf{a}(x)$, we use a set of $M$ known training solutions, $\{u_i\}_{i=1}^M$, which form a basis. For each training solution, we have the approximate relation $u_i(x) \approx \mathbf{a}(x) \mathbf{u}_i$, where $\mathbf{u}_i = (u_i(x_1), \dots, u_i(x_N))^\top$. Stacking these relations forms the linear system:
\begin{equation*}
    \mathbf{b}(x) \approx \mathbf{a}(x) V,
\end{equation*}
where $\mathbf{b}(x) = (u_1(x), \dots, u_M(x))$ is the vector of training solutions evaluated at $x$, and $V$ is the $N \times M$ matrix whose columns are the boundary data of the training solutions, $V_{ji} = u_i(x_j)$.

To find a stable solution for $\mathbf{a}(x)$, we solve the Tikhonov minimization problem:
\begin{equation}
    \mathbf{a}_*(x) = \arg\min_{\mathbf{a} \in \mathbb{C}^{1 \times N}} \left\{ \|\mathbf{a}V - \mathbf{b}(x)\|^2 + \alpha \|\mathbf{a}\|^2 \right\},
    \label{eq:tikhonov}
\end{equation}
where $\alpha > 0$ is a regularization parameter. The unique solution is given by:
\begin{equation}
    \mathbf{a}_*(x) = \mathbf{b}(x) V^* (VV^* + \alpha I)^{-1},
    \label{eq:solution_a}
\end{equation}
where $V^*$ is the conjugate transpose of $V$. The choice of the basis $\{u_i\}$ is the central component of the method.

The inclusion of the regularization term $\alpha \|\mathbf{a}\|^2$ is crucial. Since the basis functions are chosen to form a complete system, the resulting matrix $V$ inevitably becomes ill-conditioned as $M$ increases due to the numerical linear dependence of the basis. Tikhonov regularization mitigates this ill-posedness, allowing for stable reconstruction of the operator even with highly redundant basis sets.

Directly evaluating \eqref{eq:solution_a} requires inverting an $N \times N$ matrix, which has a computational complexity of $O(N^3)$. In typical applications, the number of boundary collocation points $N$ often exceeds the number of basis functions $M$. To improve efficiency, we utilize the matrix identity $V^* (VV^* + \alpha I)^{-1} = (V^* V + \alpha I)^{-1} V^*$. This transforms the solution into:
\begin{equation}
    \mathbf{a}_*(x) = \mathbf{b}(x) (V^* V + \alpha I)^{-1} V^*.
    \label{eq:solution_a_dual}
\end{equation}
This formulation involves inverting an $M \times M$ matrix, reducing the computational complexity to $O(M^3)$.

\begin{remark}
A key property of the LbNM framework is that the approximation error on the training set is monotonically non-increasing with respect to the expansion of the basis set. Mathematically, adding new training solutions expands the hypothesis space, ensuring that the new optimum is at least as good as the previous one. This property is critical for practical applications. It allows us to add compensatory basis functions targeting specific solution properties, such as local singularities, corner behaviors, or auxiliary sources, without degrading the global approximation quality established by the original basis.

Furthermore, the framework admits an efficient online learning algorithm. When a new basis function is added, the kernel matrix in the primal form $(VV^* + \alpha I)$ undergoes a rank-one update. By applying the Sherman-Morrison-Woodbury formula \cite{Golub2013}, the inverse matrix can be updated sequentially from the previous state. This technique reduces the computational cost of adding a new sample from cubic to quadratic $O(N^2)$, making adaptive strategies that iteratively enrich the basis highly efficient.

\end{remark}

\subsection{The Bessel basis}
This work employs a basis of Bessel functions, which operates directly on the original, unscaled domain $\Omega$. We assume $\Omega$ is contained within a disk $O_\rho$ of radius $\rho$. The basis functions are solutions to the Helmholtz equation in polar coordinates $(r, \theta)$:
\begin{equation}
    \tilde{u}_n(x) = J_n(kr)e^{\ii n\theta}, \quad \text{for } n = -M_h, \dots, M_h,
    \label{eq:bb_basis_unnormalized}
\end{equation}
where $M=2M_h+1$ is the total number of basis functions. For improved numerical conditioning, these functions are normalized since $J_n(x)$ decays rapidly as $n$ grows. The final basis functions are:
\begin{equation}
    u_n(x) = \frac{\tilde{u}_n(x)}{\|\tilde{u}_n\|_{L^2(\partial O_{\rho})}} = \frac{\tilde{u}_n(x)}{\sqrt{2\pi \rho} |J_n(k\rho)|}.
    \label{eq:bb_basis}
\end{equation}

\begin{algorithm}[H]
\caption{The LbNM with a Bessel basis}
\label{alg:bb_lbnm}
\begin{algorithmic}[1]
\Require
Problem domain $\Omega$, wavenumber $k$, boundary function $f(x)$.
A set of $P$ target points $\{z_p\}_{p=1}^P \subset \Omega$.
Algorithmic parameters: Number of basis functions $M=2M_h+1$, number of boundary points $N$, regularization parameter $\alpha$.

\Ensure
Numerical solution at target points, $\{u^{(c)}(z_p)\}_{p=1}^P$.

\Statex
\Procedure{LearnOperator}{}
    \State Define $N$ collocation points $\{x_j\}_{j=1}^N$ on $\partial\Omega$.
    \State Construct the $M$ normalized Bessel basis functions $\{u_n(x)\}_{n=-M_h}^{M_h}$ using \eqref{eq:bb_basis}.
    \State Assemble the $N \times M$ boundary data matrix $V$ with entries $V_{jn} = u_n(x_j)$.
    \State Pre-compute the inverse matrix $W = (VV^* + \alpha I)^{-1}$.
    \For{each target point $z_p$ in $\{z_p\}_{p=1}^P$}
        \State Assemble the $1 \times M$ basis vector $\mathbf{b}(z_p) = (u_{-M_h}(z_p), \dots, u_{M_h}(z_p))$.
        \State Compute the operator coefficients: $\mathbf{a}_*(z_p) = \mathbf{b}(z_p) V^* W$.
    \EndFor
    \State \Return The set of operator coefficients $\{\mathbf{a}_*(z_p)\}_{p=1}^P$.
\EndProcedure

\Statex
\Procedure{ApplyOperator}{$\{\mathbf{a}_*(z_p)\}$}
    \State Evaluate the boundary data vector $\mathbf{f} = (f(x_1), \dots, f(x_N))^\top$.
    \For{each target point $z_p$ in $\{z_p\}_{p=1}^P$}
        \State Compute the solution: $u^{(c)}(z_p) = \mathbf{a}_*(z_p) \mathbf{f}$.
    \EndFor
    \State \Return The solution vector $\{u^{(c)}(z_p)\}_{p=1}^P$.
\EndProcedure
\end{algorithmic}
\end{algorithm}


\section{Theoretical Analysis}
\label{sec:theory}

In this section, we provide a rigorous error analysis for the LbNM equipped with the Bessel basis. We begin by the connection between the fundamental solution basis and the Bessel basis through potential theory. Then we provide the necessary preliminaries and finally derive the convergence rates for the proposed method.

\subsection{Two Discretizations of the Single-Layer Potential}

It is illuminating to view the two basis (FS and BB) as different discretization strategies for the single-layer potential representation of the solution. Suppose the solution $u(x)$ can be continued to a domain $O_d$ that contains the problem domain $\Omega$. The solution can be represented as a single-layer potential on the boundary $\partial O_d$:
\begin{equation}
\label{eq:single layer potential expression of the solution}
    u(x) = \frac{\ii}{4} \int_{0}^{2\pi} H_0^{(1)}(k|x - d e^{\ii\phi}|) g(\phi) \dd\phi, \quad \text{for } |x| < d,
\end{equation}
where $g(\phi)$ is the density function.

The Fundamental Solution (FS) basis used in \cite{Chen2025} corresponds to a spatial discretization of \eqref{eq:single layer potential expression of the solution} (e.g., a trapezoidal rule). If we discretize the integration variable $\phi$ at points $\phi_j$, the integral is approximated by a linear combination of kernels $\Phi(x, de^{\ii\phi_j}) = \frac{\ii}{4} H_0^{(1)}(k|x - de^{\ii\phi_j}|)$, which form the FS basis.

The Bessel Function (BB) basis corresponds to a spectral expansion of the integration kernel. Using Graf's addition theorem, the kernel can be expanded in a Fourier-Bessel series for $|x|=r < d$:
\begin{equation*}
    \frac{\ii}{4} H_0^{(1)}(k|x - d e^{\ii\phi}|) = \frac{\ii}{4} \sum_{n \in \mathbb{Z}} H_n^{(1)}(kd) e^{-\ii n\phi} J_n(kr)e^{\ii n\theta}.
\end{equation*}
Substituting this expansion into \eqref{eq:single layer potential expression of the solution} yields:
\begin{equation*}
\begin{split}
    u(x) &= \int_{0}^{2\pi} \left( \frac{\ii}{4} \sum_{n \in \mathbb{Z}} H_n^{(1)}(kd) e^{-\ii n\phi} J_n(kr)e^{\ii n\theta} \right) g(\phi) \dd\phi \\
    &= \sum_{n \in \mathbb{Z}} \left( \frac{\ii}{4} H_n^{(1)}(kd) \int_{0}^{2\pi} g(\phi) e^{-\ii n\phi} \dd\phi \right) J_n(kr)e^{\ii n\theta}.
\end{split}
\end{equation*}
Recognizing the term in parentheses as the coefficient determined by the density $g$, this recovers the BB basis expansion $u(x) = \sum c_n J_n(kr)e^{\ii n\theta}$. Thus, while the FS-LbNM relies on the convergence of quadrature rules which is sensitive to the placement of nodes, the BB-LbNM relies on the spectral convergence of the kernel expansion, which suggests superior performance for smooth solutions.

\subsection{Preliminaries}
We first introduce a fundamental stability estimate for the Helmholtz equation. This result justifies the strategy of the LbNM, which implies that if the numerical solution approximates the boundary data with high accuracy, the error in the interior is automatically controlled.

\begin{lemma}[Interior Stability Estimate]
\label{lemma:interior stability estimate}
\textit{Let $\Omega$ be a bounded domain. If $u \in H^1(\Omega)$ satisfies the Helmholtz equation $\Delta u + k^2 u = 0$ in $\Omega$ and $k^2$ is not a Dirichlet eigenvalue of the Laplacian on $\Omega$, then the the following estimation holds:}
\begin{equation}
\label{eq:interior_stability}
    \|u\|_{L^2(\Omega)} \le C_{stab} \|u\|_{L^2(\partial \Omega)},
\end{equation}
\textit{where the constant $C_{stab}$ depends on the domain $\Omega$ and the distance of $k^2$ to the Dirichlet spectrum \cite{KuttlerSigillito1978Bounding}.}
\end{lemma}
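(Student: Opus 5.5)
The plan is to prove the estimate by duality: represent $\|u\|_{L^2(\Omega)}$ through an adjoint Dirichlet problem with a volume source, then integrate by parts so that only the boundary trace of $u$ and the normal derivative of the adjoint solution remain. Since $u$ solves a homogeneous equation, its interior norm is then controlled by $\|u\|_{L^2(\partial\Omega)}$ times a constant depending only on the adjoint problem.

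\textbf{Adjoint problem.} Fix $u$ as in the statement. Let $w \in H^1_0(\Omega)$ be the solution of
\begin{equation*}
\Delta w + \bar{k}^2 w = u \ \text{in } \Omega, \qquad w = 0 \ \text{on } \partial\Omega .
\end{equation*}
This problem is uniquely solvable. The reason is that $\bar{k}^2$ is a Dirichlet eigenvalue of $-\Delta$ if and only if $k^2$ is: the Dirichlet Laplacian is self-adjoint with real spectrum, so its spectrum is invariant under conjugation. Writing $\{\lambda_j,\varphi_j\}$ for the Dirichlet eigenpairs, with $\{\varphi_j\}$ orthonormal, we have
\begin{equation*}
w = \sum_j \frac{(u,\varphi_j)}{\bar{k}^2-\lambda_j}\,\varphi_j ,
\end{equation*}
which gives
\begin{equation*}
\|w\|_{L^2(\Omega)} \le \delta^{-1}\|u\|_{L^2(\Omega)}, \qquad \delta = \operatorname{dist}(k^2,\{\lambda_j\}).
\end{equation*}

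\textbf{Regularity of the adjoint solution.} I then need $\partial_\nu w \in L^2(\partial\Omega)$ with
\begin{equation*}
\|\partial_\nu w\|_{L^2(\partial\Omega)} \le C_\Omega\bigl(\|\Delta w\|_{L^2(\Omega)}+\|w\|_{L^2(\Omega)}\bigr) \le C_\Omega(1+|k|^2\delta^{-1}+\delta^{-1})\,\|u\|_{L^2(\Omega)} .
\end{equation*}
For $C^{1,1}$ or convex domains this follows from $H^2$ elliptic regularity and the trace theorem. For general Lipschitz, piecewise-smooth $\Omega$ I would instead invoke the Jerison--Kenig/Dahlberg $L^2$ estimate for the normal derivative. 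This can be obtained via a Rellich identity with a vector field $h$ transversal to $\partial\Omega$, which gives
\begin{equation*}
\int_{\partial\Omega} (h\cdot\nu)\,|\partial_\nu w|^2 \lesssim \|\nabla w\|_{L^2(\Omega)}^2 + \|\Delta w\|_{L^2(\Omega)}\,\|\nabla w\|_{L^2(\Omega)} ,
\end{equation*}
using $w=0$ on $\partial\Omega$. This step is the main obstacle, since it is where the geometry of $\Omega$ enters the constant.

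\textbf{Green's identity.} Assume first that $u$ is smooth up to the boundary. Applying Green's second identity,
\begin{equation*}
\|u\|_{L^2(\Omega)}^2 = \int_\Omega u\,\overline{(\Delta w+\bar{k}^2 w)} = \int_\Omega \overline{w}\,(\Delta u + k^2 u) + \int_{\partial\Omega}\bigl(u\,\partial_\nu \overline{w} - \overline{w}\,\partial_\nu u\bigr) = \int_{\partial\Omega} u\,\partial_\nu\overline{w} .
\end{equation*}
Here the volume term vanishes because $u$ solves the Helmholtz equation, and the second boundary term vanishes because $w=0$ on $\partial\Omega$.

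\textbf{Conclusion.} By Cauchy--Schwarz and the bound on $\partial_\nu w$,
\begin{equation*}
\|u\|_{L^2(\Omega)}^2 \le \|u\|_{L^2(\partial\Omega)}\,\|\partial_\nu w\|_{L^2(\partial\Omega)} .
\end{equation*}
Dividing by $\|u\|_{L^2(\Omega)}$ gives the estimate with
\begin{equation*}
C_{stab} = C_\Omega\bigl(1+(1+|k|^2)\delta^{-1}\bigr).
\end{equation*}
This constant depends only on $\Omega$, on $k$, and on the distance of $k^2$ to the Dirichlet spectrum.

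To pass from smooth $u$ to general $u\in H^1(\Omega)$, I would approximate $u$ in $H^1(\Omega)$ by solutions smooth up to the boundary. One way is by solving with smoothed boundary data, using the well-posedness supplied by the same spectral gap. Since $H^1(\Omega)$ traces converge in $L^2(\partial\Omega)$, the inequality persists in the limit.
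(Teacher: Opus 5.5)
Your duality argument is correct apart from the final density step (see below). It is also a genuinely different route from the paper's, since the paper gives no proof and only cites Kuttler--Sigillito.

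\textbf{The cited route.} The standard argument behind that reference is an $L^2$ version of the Moler--Payne eigenvalue bound. It compares $u$ with the harmonic function $z$ that has the same boundary values. Since $u-z\in H^1_0(\Omega)$ and $(\Delta+k^2)(u-z)=-k^2 z$, testing with the Dirichlet eigenfunctions gives $(u,\varphi_j)=\frac{\lambda_j}{\lambda_j-k^2}(z,\varphi_j)$, and hence
\[
\|u\|_{L^2(\Omega)}\le \sup_j\frac{\lambda_j}{|\lambda_j-k^2|}\,\|z\|_{L^2(\Omega)}\le \sqrt{\sigma}\,\sup_j\frac{\lambda_j}{|\lambda_j-k^2|}\,\|u\|_{L^2(\partial\Omega)},
\]
where $\sigma$ is the constant in the harmonic estimate $\|z\|_{L^2(\Omega)}^2\le\sigma\|z\|_{L^2(\partial\Omega)}^2$.

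\textbf{Comparison.} In the cited route the spectral step is purely variational, and all of the geometry sits in the single constant $\sigma$. In your proof the spectral gap enters through the resolvent bound on $w$, and the geometry enters through the $L^2(\partial\Omega)$ bound on $\partial_\nu w$. The ingredients are the same: the harmonic estimate is exactly your argument with $k=0$.
\begin{itemize}
\item Your version does everything in one step and handles complex $k$ directly.
\item The cited route gives the slightly cleaner factor $\sup_j\lambda_j/|\lambda_j-k^2|\le 1+|k|^2\delta^{-1}$, in place of your $1+(1+|k|^2)\delta^{-1}$.
\item Both constants depend on $|k|$ as well as on $\delta$, which the statement leaves implicit.
\item Both need a Lipschitz boundary rather than an arbitrary bounded domain, as you correctly assume.
\end{itemize}
For the piecewise-smooth domains with corners used in the paper, you can replace Jerison--Kenig by corner regularity. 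Since all interior angles are below $2\pi$, we have $w\in H^{1+s}(\Omega)$ for some $s>1/2$. Hence $\nabla w$ has an $L^2(\partial\Omega)$ trace with the required bound.

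\textbf{The density step.} This is the one step I would change. On domains with reentrant corners, which the paper's setting allows, solving with smoothed Dirichlet data does \emph{not} give solutions smooth up to the boundary. Near a corner of opening $\omega>\pi$, the solution in general contains the term $r^{\pi/\omega}\sin(\pi\vartheta/\omega)$, whose gradient is unbounded.

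No approximation is needed, however. Take $u\in H^1(\Omega)$ and $w\in H^1_0(\Omega)$ with $\Delta w\in L^2(\Omega)$ and $\partial_\nu w\in L^2(\partial\Omega)$.
\begin{itemize}
\item The generalized Green formula gives $\int_\Omega (u\,\Delta\overline{w}+\nabla u\cdot\nabla\overline{w})=\int_{\partial\Omega}u\,\partial_\nu\overline{w}$.
\item The weak form of $\Delta u+k^2u=0$, tested with $\overline{w}\in H^1_0(\Omega)$, gives $\int_\Omega\nabla u\cdot\nabla\overline{w}=k^2\int_\Omega u\,\overline{w}$.
\item Subtracting the second identity from the first yields $\|u\|_{L^2(\Omega)}^2=\int_{\partial\Omega}u\,\partial_\nu\overline{w}$ directly.
\end{itemize}
Equivalently, you may approximate $u$ in $H^1(\Omega)$ by arbitrary functions in $C^\infty(\overline{\Omega})$ rather than by solutions.
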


The validity of the Bessel basis approach relies on the approximation properties of the chosen basis functions. In the theory of elliptic partial differential equations, a set of solutions is called a complete system of solutions if any solution in a given domain can be approximated arbitrarily closely by a linear combination of these functions \cite{Bergman1961Integral, Gilbert1974Constructive, Vekua1967New}. For the Helmholtz equation on a disk, the following lemma rigorously establishes the completeness of the Bessel basis.

\begin{lemma}[Completeness of the Bessel Basis]
\label{lemma:bessel fourier series}\textit{Assume $k^2$ is not a Dirichlet eigenvalue of the Laplacian on $O_d$. If $u \in H^1(O_d)$ satisfies the Helmholtz equation $(\Delta + k^2)u = 0$, then $u$ admits a unique series expansion that is convergent in $H^1(O_d)$:}
\begin{equation*}
    u(r, \theta) = \sum_{n \in \mathbb{Z}} c_n J_n(kr) e^{\ii n\theta}.
\end{equation*}
\end{lemma}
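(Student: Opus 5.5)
Our plan is to use separation of variables in polar coordinates, together with the fact that $k^2$ is not a Dirichlet eigenvalue on $O_d$, which means $J_n(kd)\neq 0$ for every $n\in\mathbb{Z}$. Since $u\in H^1(O_d)$, for almost every $r\in(0,d)$ the angular slice $u(r,\cdot)$ lies in $L^2(0,2\pi)$. We expand it in a Fourier series
\begin{equation*}
u(r,\theta)=\sum_{n\in\mathbb{Z}} u_n(r)e^{\ii n\theta}, \qquad u_n(r)=\frac{1}{2\pi}\int_0^{2\pi}u(r,\theta)e^{-\ii n\theta}\,\dd\theta .
\end{equation*}
Interior elliptic regularity makes $u$ real-analytic inside $O_d$. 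Testing the Helmholtz equation against $e^{-\ii n\theta}$ and integrating by parts in $\theta$ then shows that each $u_n$ solves Bessel's equation $u_n''+r^{-1}u_n'+(k^2-n^2r^{-2})u_n=0$ on $(0,d)$. The solution space is spanned by $J_n(kr)$ and $Y_n(kr)$. Because $u$ is smooth at the origin, $u_n$ stays bounded as $r\to 0$; in fact $u_n(r)=O(r^{|n|})$, and for $n=0$ the $\log r$ singularity of $Y_0$ is also excluded. This rules out $Y_n$, so $u_n(r)=c_nJ_n(kr)$ for a unique constant $c_n$. Uniqueness of the full expansion follows from uniqueness of Fourier coefficients on each circle $r=\text{const}$.

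Next we identify the coefficients through the boundary trace. The trace $f=u|_{\partial O_d}$ belongs to $H^{1/2}(\partial O_d)\subset L^2$, with Fourier coefficients $f_n$. Continuity of the coefficient map $r\mapsto u_n(r)$ up to $r=d$ (obtained from the trace theorem applied on circles) gives $c_n=f_n/J_n(kd)$. This is the point where the non-eigenvalue hypothesis is used: we need $J_n(kd)\neq0$.

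The main obstacle is convergence in $H^1(O_d)$ rather than just pointwise or in $L^2$ on interior circles. Let $S_N=\sum_{|n|\le N}c_nJ_n(kr)e^{\ii n\theta}$. By orthogonality of $e^{\ii n\theta}$ on circles, $\|u-S_N\|_{H^1(O_d)}^2$ splits into a sum over $|n|>N$ of the radial terms
\begin{equation*}
2\pi\int_0^d\Bigl(|u_n|^2+|u_n'|^2+\tfrac{n^2}{r^2}|u_n|^2\Bigr)r\,\dd r .
\end{equation*}
This uses that $|\nabla u|^2=|\partial_r u|^2+r^{-2}|\partial_\theta u|^2$ and that the Fourier series may be differentiated termwise in the sense of $L^2(O_d)$; the latter holds because $\partial_r u$ and $r^{-1}\partial_\theta u$ lie in $L^2$ and their angular Fourier coefficients are exactly $u_n'$ and $\ii n u_n/r$. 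Parseval in $\theta$ followed by integration in $r$ then gives
\begin{equation*}
\|u\|_{H^1(O_d)}^2=\sum_n 2\pi\int_0^d\Bigl(|u_n|^2+|u_n'|^2+\tfrac{n^2}{r^2}|u_n|^2\Bigr)r\,\dd r<\infty .
\end{equation*}
The tail of this convergent series therefore tends to zero, which is exactly $S_N\to u$ in $H^1(O_d)$.

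The delicate step is justifying this Parseval identity with radial derivatives for a general $H^1$ function. We would first prove it for $C^\infty(\overline{O_d})$ functions and then extend by density. An alternative is to apply it to $u$ on $O_{d'}$ for $d'<d$, where $u$ is smooth, and let $d'\to d$ by monotone convergence. Since each summand is nonnegative and increasing in $d'$, monotone convergence applies. We expect this to be the cleanest route, and it avoids needing uniform asymptotics of $J_n(kr)/J_n(kd)$ for complex $k$.
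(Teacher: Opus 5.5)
Your proof is correct, but it runs in the opposite direction from the paper's. The paper starts from the trace $g\in H^{1/2}(\partial O_d)$ and \emph{defines} $v=\sum_n (\hat g_n/J_n(kd))J_n(kr)e^{\ii n\theta}$. It then shows $v\in H^1(O_d)$ by bounding each mode with the large-order asymptotics $J_n(z)\sim n^{-1/2}(ez/2n)^n$ and the recurrence for $J_n'$. This gives $\|v_n\|_{H^1}^2\lesssim |n|\,|\hat g_n|^2$, which is summable by the $H^{1/2}$ condition. Finally it obtains $u=v$ from uniqueness of the Dirichlet problem. The non-eigenvalue hypothesis is therefore used twice: for $J_n(kd)\neq 0$ and for that uniqueness.

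You instead decompose $u$ itself. The Bessel ODE plus boundedness at the origin forces $u_n=c_nJ_n(kr)$. $H^1$ convergence then comes for free from the modewise Parseval identity for $\|u\|_{H^1(O_d)}^2$, which you establish on $O_{d'}$ and pass to $d'\to d$ by monotone convergence.

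What your route buys:
\begin{itemize}
\item It needs no Bessel asymptotics. You never have to make rigorous the ratio bound $|J_n(kr)/J_n(kd)|\lesssim (r/d)^{|n|}$, uniformly in $r\in(0,d)$ and $n$ for complex $k$, which the paper only sketches.
\item Existence and uniqueness of the expansion need no non-eigenvalue assumption. That hypothesis enters only when you write $c_n=f_n/J_n(kd)$ through the trace, which is the form actually used in Theorem~\ref{thm:bessel series approximation}.
\end{itemize}

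What the paper's route buys is a converse you do not get: every $H^{1/2}$ datum on $\partial O_d$ is realized by an $H^1$ Bessel series. In other words, it gives an explicit solution formula for the Dirichlet problem.

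One step is worth writing out: $u_n(r)\to f_n$ as $r\to d$. It follows because $u\in H^1$ on the annulus $\{d/2<r<d\}$ makes $r\mapsto u(r,\cdot)$ continuous from $[d/2,d]$ into $L^2(0,2\pi)$, with endpoint value equal to the trace.
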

\begin{proof}
Since $u \in H^1(O_d)$, by the standard Trace Theorem \cite{Evans2010pde}, there exists a unique boundary trace $g = \text{tr}(u) \in H^{1/2}(\partial O_d) $. Define the Fourier coefficients of the boundary function $g$ on the circle of radius $d$:
\begin{equation}
\label{eq:fourier coefficient of the boundary value}
    \hat{g}_n = \frac{1}{2\pi} \int_{0}^{2\pi} g(d, \theta) e^{-\ii n \theta} \dd \theta, \quad n \in \mathbb{Z}.
\end{equation}
Since $g \in H^{\frac{1}{2}}(\partial O_d)$, it holds
\begin{equation}
    \sum_{n \in \mathbb{Z}} (1+|n|) |\hat{g}_n|^2 < \infty.
    \label{eq:h12_condition}
\end{equation}
We seek a series solution of the form:
\begin{equation}
\label{eq:bessel type series expansion}
    v(r, \theta) = \sum_{n \in \mathbb{Z}} c_n J_n(kr) e^{\ii n \theta}.
\end{equation}
To match the boundary data at $r=d$ between \eqref{eq:fourier coefficient of the boundary value} and \eqref{eq:bessel type series expansion}, the coefficients $c_n$ must satisfy $c_n J_n(kd) = \hat{g}_n$.  If $J_n(kd) = 0$, then $J_n(kr)e^{\ii n\theta}$ would be the eigenvector corresponding to the Dirichlet eigenvalue $k^2$. By the assumption that $k^2$ is not a Dirichlet eigenvalue, we have $J_n(kd) \neq 0$ for all $n \in \mathbb{Z}$. Thus, the coefficients are determined by
\[
c_n = \frac{\hat{g}_n}{J_n(kd)}.\]

Now we prove that $v \in H^1(O_d)$. Let $v_n(r, \theta) = c_n J_n(kr) e^{\ii n \theta}$ be the $n$-th term. Its squared $H^1$-norm is given by:
\begin{equation}
\label{eq:the squared H1 norm of v_n}
    \|v_n\|_{H^1(O_d)}^2 = 2\pi |c_n|^2 \int_0^d r \left( |J_n(kr)|^2 + |k J_n'(kr)|^2 + \frac{n^2}{r^2}|J_n(kr)|^2 \right) \dd r.
\end{equation}
To explicitly estimate the norm, we utilize the following properties of the Bessel function \cite{abramowitz1964handbook}:
\begin{enumerate}
    \item The derivative recurrence relation: $k J_n'(kr) = \frac{n}{r} J_n(kr) - k J_{n+1}(kr)$.
    \item The asymptotic formula for large order $n$: $J_n(z) \sim \frac{1}{\sqrt{n}} \left(\frac{ez}{2n}\right)^n.$
    \item The reflection formula: $J_{-n}(z) = (-1)^nJ_n(z)$.
\end{enumerate}
Substituting this back into \eqref{eq:the squared H1 norm of v_n} and replacing $|c_n|^2$ with $|\hat{g}_n|^2 / |J_n(kd)|^2$, we have
\begin{equation*}
    \|v_n\|_{H^1(O_d)}^2 \lesssim  n^2 |\hat{g}_n|^2 \int_0^d \frac{1}{r} \left(\frac{r}{d}\right)^{2|n|} \dd r = \frac{|n|}{2}|\hat{g}_n|^2.
\end{equation*}
Therefore, the condition $\Vert v \Vert_{H^1(O_d)}^2 = \sum_{n \in \mathbb{Z}} \|v_n\|_{H^1(O_d)}^2 < \infty$ is equivalent to $\sum_{n \in \mathbb{Z}} |n| |\hat{g}_n|^2 < \infty$, which is given by \eqref{eq:h12_condition}.

Since $v \in H^1(O_d)$ is a weak solution to the Helmholtz equation and satisfies $\text{tr}(v) = g$, the difference $w = u - v \in H^1(O_d)$ satisfies $\Delta w + k^2 w = 0$ and $\text{tr}(w) = 0$. By the uniqueness of the Dirichlet problem (since $k^2$ is not an eigenvalue), $w = 0$. Hence, $u = v$ in $O_d$.
\end{proof}

For the solution on a general domain $\Omega$, we rely on the Runge property \cite{Lax1956Stability} to approximate it on a larger disk. Here we cite the quantitative version \cite{RulandSalo2019} appropriate for the Helmholtz equation.

\begin{lemma}[Quantitative Runge Approximation]\label{lemma:quantitative runge approximation}
\textit{Let $\Omega \subset \subset \tilde{\Omega} \subset \subset \Omega_{ext}$ be bounded open Lipschitz sets. Define the spaces of solutions:
$$ S_1 := \{ w \in H^1(\Omega) : (\Delta + k^2)w = 0 \text{ in } \Omega \}, $$
$$ S_2 := \{ h \in H^1(\Omega_{ext}) : (\Delta + k^2)h = 0 \text{ in } \Omega_{ext} \}. $$
Then, for any $\varepsilon \in (0,1)$, there exists a constant $C$, parameters $\theta>0$ and $\beta\ge 1$, such that:
\begin{enumerate}
    \item For any $h \in S_1$, there exists $u \in S_2$  satisfying:
    \begin{equation}
    \label{eq:non-continuable runge approximation}
        \| h - u \|_{L^2(\Omega)} \le \varepsilon \| h \|_{H^1(\Omega)}, \quad \| u \|_{H^{1/2}(\partial \Omega_{ext})} \le C e^{C \varepsilon^{-\theta}} \| h \|_{L^2(\Omega)},\quad .
    \end{equation}
    \item If $h$ can be continued to $\tilde{h} \in H^1(\tilde{\Omega})$ satisfying the Helmholtz equation, then there exists $u \in S_2$  satisfying a polynomial bound:
    \begin{equation}
    \label{eq:continuable runge approximation}
        \| \tilde{h} - u \|_{L^2(\Omega)} \le \varepsilon \| \tilde{h} \|_{H^1(\tilde{\Omega})}, \quad \| u \|_{H^{1/2}(\partial \Omega_{ext})} \le C \varepsilon^{-\beta} \| \tilde{h} \|_{L^2(\Omega)}.
    \end{equation}
\end{enumerate}
}
\end{lemma}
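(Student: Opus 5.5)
The statement is a quantitative Runge result quoted from \cite{RulandSalo2019}. Here is how I would reconstruct it rather than just cite it. The approach is the standard duality argument combined with a quantitative unique continuation estimate.

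\emph{Setup.} Consider the restriction operator $A: H^{1/2}(\partial\Omega_{ext}) \to L^2(\Omega)$ that sends Dirichlet data $\varphi$ to $u|_\Omega$, where $u\in S_2$ solves $(\Delta+k^2)u=0$ in $\Omega_{ext}$ with $u=\varphi$ on $\partial\Omega_{ext}$. This is well defined, possibly after enlarging $\Omega_{ext}$ slightly so that $k^2$ avoids its Dirichlet spectrum. I would take its singular value decomposition, $A\varphi_j=\sigma_j w_j$, with $\{w_j\}$ orthonormal in $L^2(\Omega)$ and spanning $\overline{\mathrm{Ran}A}$. Density of $\mathrm{Ran}A$ in $S_1$ with respect to $L^2(\Omega)$ is the qualitative Runge property \cite{Lax1956Stability}. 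For $h\in S_1$ and a threshold $\mu>0$, I set
\[
u=\sum_{\sigma_j\ge\mu}\sigma_j^{-1}(h,w_j)\,A\varphi_j .
\]
This gives directly $\|u\|_{H^{1/2}(\partial\Omega_{ext})}\le \mu^{-1}\|h\|_{L^2(\Omega)}$. It remains to show that the tail $\|\sum_{\sigma_j<\mu}(h,w_j)w_j\|_{L^2(\Omega)}$ is at most $\varepsilon\|h\|_{H^1(\Omega)}$ for a suitable $\mu(\varepsilon)$.

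\emph{Tail estimate.} I would use duality. For $v=\sum_{\sigma_j<\mu}(h,w_j)w_j$, let $z$ solve the adjoint problem $(\Delta+\bar k^2)z=v\chi_\Omega$ in $\Omega_{ext}$ with $z=0$ on $\partial\Omega_{ext}$. Green's formula gives $A^*v=\partial_\nu z|_{\partial\Omega_{ext}}$, so $\|\partial_\nu z\|_{H^{-1/2}(\partial\Omega_{ext})}\le\mu\|v\|$. Also $\|v\|^2=(h,v)$. Integrating by parts over $\Omega$ turns $(h,v)$ into boundary terms on $\partial\Omega$ involving the Cauchy data of $z$ and of $h$. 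These are bounded by $\|h\|_{H^1(\Omega)}$ times the Cauchy data of $z$ on $\partial\Omega$, modulo a regularity loss that is handled with a slightly smaller domain and an $H^1$ norm. Now $z$ solves the homogeneous Helmholtz equation in $\Omega_{ext}\setminus\overline\Omega$ with zero Dirichlet data and small Neumann data on $\partial\Omega_{ext}$. Propagating smallness from $\partial\Omega_{ext}$ to $\partial\Omega$ is a conditional stability estimate of logarithmic type, obtained through Carleman estimates and three-ball inequalities:
\[
\|z\|_{H^1(\text{near }\partial\Omega)}\le C\,\frac{\|v\|}{\log\bigl(C\|v\|/(\mu\|v\|)\bigr)^{1/\theta}}.
\]
Combining this with the duality identity gives $\|v\|\lesssim\|h\|_{H^1}\,|\log\mu|^{-1/\theta}$. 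Choosing $\mu=Ce^{-C\varepsilon^{-\theta}}$ yields \eqref{eq:non-continuable runge approximation}.

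\emph{Continuable case.} If $h$ extends to $\tilde h$ on $\tilde\Omega\supset\supset\Omega$, I can put the duality pairing on $\partial\tilde\Omega$ instead of $\partial\Omega$. The unique continuation then runs through the region $\tilde\Omega\setminus\Omega$, where $z$ is controlled on both sides. This is an interior situation, so a Hölder-type three-sphere estimate $\|z\|\lesssim\mu^{\gamma}\|v\|^{1-\gamma}$ replaces the logarithmic one. The tail is then bounded by $\mu^{\gamma}$ up to constants, and the choice $\mu=\varepsilon^{\beta}$ with $\beta=1/\gamma$ gives the polynomial bound \eqref{eq:continuable runge approximation}.

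\emph{Main obstacle.} The hard step is the quantitative unique continuation estimate: logarithmic up to the boundary $\partial\Omega$, Hölder in the interior, and with constants uniform in $k$ for fixed geometry. I would take it from the Carleman-estimate literature as in \cite{RulandSalo2019} rather than reprove it.
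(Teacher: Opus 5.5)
Your reconstruction is essentially the duality argument of \cite{RulandSalo2019}, which the paper only cites and does not reprove: truncated singular value expansion of the restriction map $H^{1/2}(\partial\Omega_{ext})\to L^2(\Omega)$, the adjoint Dirichlet problem, and logarithmic propagation of smallness up to $\partial\Omega$ in case 1 versus H\"older propagation to the interior surface $\partial\tilde{\Omega}$ in case 2 (there the bound should read $\|z\|\lesssim(\mu\|v\|)^{\gamma}\|v\|^{1-\gamma}=\mu^{\gamma}\|v\|$). You should state explicitly that both the Runge density and the unique continuation from $\partial\Omega_{ext}$ require $\Omega_{ext}\setminus\overline{\Omega}$ to be connected --- assumed in \cite{RulandSalo2019}, missing from the statement here, and genuinely necessary (the claim fails for an annulus inside a disk) --- whereas the uniformity of the constants in $k$, which you list as part of the main obstacle, is not needed.
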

This lemma guaranties that solutions on $\Omega$ can be approximated by solutions on a larger disk $O_d$, allowing us to utilize the convergence result on $O_d$.

\subsection{Convergence Analysis}

We now establish the convergence rate for the case where the solution $u$ to problem \eqref{eq:helmholtz} can be continued to $O_d$, satisfying $\Omega \subset O_\rho \subset O_d$.

Recall that in the BB-LbNM algorithm, we utilize the normalized basis functions on the boundary $\partial O_\rho$:
\begin{equation}
    u_n(x) = \frac{J_n(kr)e^{\ii n\theta}}{ \|J_n(kr)e^{\ii n\theta}\|_{L^2(\partial O_\rho)} } = \frac{J_n(kr)e^{\ii n\theta}}{ \sqrt{2\pi\rho} |J_n(k\rho)| }.
    \label{eq:normalized_basis}
\end{equation}

\begin{theorem}[Bessel Series Approximation] \label{thm:bessel series approximation}
\textit{Assume $\Omega \subset O_\rho \subset O_d$, and $k^2$ is not a Dirichlet eigenvalue of the Laplacian on $O_{\rho}$ and $O_d$. If $u \in H^1(O_d)$ satisfies the Helmholtz equation $(\Delta + k^2)u = 0$,  Then there exists a linear combination $u^{(\mu)}$ of the first $M=2M_h+1$ normalized Bessel basis functions:
\begin{equation*}
    u^{(\mu)}(x) = \sum_{n=-M_h}^{M_h} \mu_{n} u_n(x),
\end{equation*}
such that the approximation error on $O_{\rho}$ is bounded by
\begin{equation}
    \| u - u^{(\mu)} \|_{L^2(O_\rho)} \le C \|u\|_{L^2(\partial O_d)} \left(\frac{\rho}{d}\right)^{M_h}.
    \label{eq:bb_approx_error}
\end{equation}
Furthermore, the coefficient vector $\mathbf{\mu} = \{\mu_n\}$ is bounded by
\begin{equation}
\label{eq:thm:bessel series approx:coeffiecient norm}
    \|\mathbf{\mu}\|_{\ell^2} \le C' \|u\|_{L^2(\partial O_d)}.
\end{equation}
The constants $C$ and $C'$ depend on $k, \rho, d$ but are independent of $M_h$.}
\end{theorem}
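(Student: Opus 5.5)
Apply Lemma \ref{lemma:bessel fourier series} on $O_d$ and truncate the series. Since $u\in H^1(O_d)$ solves the Helmholtz equation and $k^2$ is not a Dirichlet eigenvalue on $O_d$, we get $u=\sum_{n\in\mathbb{Z}} c_n J_n(kr)e^{\ii n\theta}$ with $c_n=\hat g_n/J_n(kd)$, where $\hat g_n$ are the Fourier coefficients of the trace $g=u|_{\partial O_d}$. By Parseval, $\sum_n|\hat g_n|^2=\|u\|^2_{L^2(\partial O_d)}/(2\pi d)$. Define $u^{(\mu)}$ as the truncation to $|n|\le M_h$, written in the normalized basis \eqref{eq:normalized_basis}. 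This forces
\[
\mu_n=c_n\sqrt{2\pi\rho}\,|J_n(k\rho)|=\sqrt{2\pi\rho}\,\hat g_n\,\frac{|J_n(k\rho)|}{|J_n(kd)|}.
\]
Both estimates then reduce to controlling the ratios $|J_n(kr)|/|J_n(kd)|$ for $0\le r\le\rho<d$, uniformly in $n$. All $J_n(kd)\neq0$ by the non-eigenvalue assumption on $O_d$, and $J_n(k\rho)\ne0$ by the assumption on $O_\rho$, so the normalization is well defined.

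The key step is a uniform bound
\[
\frac{|J_n(kr)|}{|J_n(kd)|}\le C_0\Bigl(\frac{r}{d}\Bigr)^{|n|}\quad\text{for all } n\in\mathbb{Z},\ 0\le r\le d,
\]
with $C_0=C_0(k,d)$. For $|n|$ beyond some threshold $n_0(k,d)$, I would use the power-series representation $J_n(z)=\frac{(z/2)^n}{n!}\bigl(1+\epsilon_n(z)\bigr)$ with $|\epsilon_n(z)|\le e^{|z|^2/4}/(n+1)-1$, or the cruder bound $|\epsilon_n|\le C|z|^2/n$. For complex $k$ this is cleaner than the real asymptotic formula. It gives $|J_n(kr)|/|J_n(kd)|\le\frac{1+|\epsilon_n(kr)|}{1-|\epsilon_n(kd)|}(r/d)^n\le 2(r/d)^n$ once $|\epsilon_n|\le 1/3$. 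The reflection formula handles negative $n$. For the finitely many $|n|<n_0$, I would use the fact that $z\mapsto J_n(z)/z^n$ is entire and bounded on the segment $\{kr: 0\le r\le d\}$, together with $J_n(kd)\ne0$; the constant absorbs these terms. I expect this uniform ratio bound to be the main obstacle: the asymptotic $J_n(z)\sim(ez/2n)^n/\sqrt{n}$ quoted in the paper is only asymptotic, so it has to be made quantitative for complex argument.

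Given the ratio bound, the coefficient estimate is immediate. We have $|\mu_n|\le\sqrt{2\pi\rho}\,C_0(\rho/d)^{|n|}|\hat g_n|\le\sqrt{2\pi\rho}\,C_0|\hat g_n|$, so $\|\mu\|_{\ell^2}\le C_0\sqrt{\rho/d}\,\|u\|_{L^2(\partial O_d)}$, which proves \eqref{eq:thm:bessel series approx:coeffiecient norm}. For the error estimate, orthogonality of $e^{\ii n\theta}$ on circles gives
\[
\|u-u^{(\mu)}\|^2_{L^2(O_\rho)}=2\pi\sum_{|n|>M_h}|\hat g_n|^2\int_0^\rho\frac{|J_n(kr)|^2}{|J_n(kd)|^2}\,r\,\dd r\le 2\pi C_0^2\rho^2\sum_{|n|>M_h}\Bigl(\frac{\rho}{d}\Bigr)^{2|n|}|\hat g_n|^2 .
\]
Bounding $(\rho/d)^{2|n|}\le(\rho/d)^{2M_h}$ and using Parseval gives \eqref{eq:bb_approx_error} with $C=C_0\rho/\sqrt{d}$. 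All constants depend only on $k,\rho,d$ through $C_0$, as claimed.
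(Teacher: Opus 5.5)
Your argument is correct. It follows the paper's outline (expand on $O_d$, truncate, control the ratio of Bessel functions), but it closes both estimates differently.

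\textbf{How the paper closes them.} The paper bounds the truncation error only on the circle $\partial O_\rho$, using the asymptotic relation $|J_n(k\rho)/J_n(kd)|\sim(\rho/d)^{|n|}$ and a geometric tail sum. It then passes to $L^2(O_\rho)$ through the interior stability estimate of Lemma~\ref{lemma:interior stability estimate}. For the coefficients, it observes that $u^{(\mu)}$ is the $L^2(\partial O_\rho)$-orthogonal projection of $u$, so $\|\mu\|_{\ell^2}\le\|u\|_{L^2(\partial O_\rho)}$. It then uses the trace theorem and interior regularity to reach $\|u\|_{L^2(\partial O_d)}$.

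\textbf{How you close them.} You prove a Bessel ratio bound uniform in both $n$ and $r\in[0,d]$. You then integrate the tail over the disk directly using angular orthogonality, and bound each $\mu_n$ termwise.

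\textbf{What your route buys.} It is self-contained, with explicit constants $C=C_0\rho/\sqrt{d}$ and $C'=C_0\sqrt{\rho/d}$. It needs neither the stability lemma nor trace or regularity results. It uses the non-eigenvalue hypothesis on $O_\rho$ only to make the normalization well defined. So, unlike $C_{stab}$, your constants do not degenerate as $k^2$ approaches the Dirichlet spectrum of $O_\rho$. It also supplies the quantitative, uniform-in-$n$ form of the ratio estimate for complex $k$. The paper's tail sum tacitly needs this, since one constant must serve all $|n|>M_h$ for every $M_h$, including small ones, but the paper only asserts it via ``$\sim$''.

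\textbf{What the paper's route buys.} It is shorter and modular. The coefficient bound follows from orthonormality alone. The boundary-to-interior step reuses the same stability lemma that closes Theorem~\ref{thm:BB-LbNM error}.

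\textbf{Two minor slips.} First, the bound $|\epsilon_n(z)|\le e^{|z|^2/4}/(n+1)-1$ is false as written, since its right side is eventually negative. The power series gives $|\epsilon_n(z)|\le (e^{|z|^2/4}-1)/(n+1)$, which is consistent with your cruder bound $C|z|^2/n$. Second, the coefficient should read $\mu_n=\sqrt{2\pi\rho}\,\hat g_n|J_n(k\rho)|/J_n(kd)$, with no modulus on $J_n(kd)$. This is harmless because you only use $|\mu_n|$.
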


\begin{proof}
By Lemma \ref{lemma:bessel fourier series}, $u$ admits a Bessel basis expansion in $O_d$, 
\[u(r, \theta) = \sum_{n \in \mathbb{Z}} c_n J_n(kr) e^{\ii n\theta}.\]
Evaluating this on the boundary $\partial O_d$, we have by Parseval's identity
\begin{equation*}
    \|u\|_{L^2(\partial O_d)}^2 = \sum_{n \in \mathbb{Z}} |c_n|^2 2\pi d |J_n(kd)|^2.
\end{equation*}
Thus for any $n$, \[|c_n|^2 \le \frac{\|u\|_{L^2(\partial O_d)}^2}{2\pi d |J_n(kd)|^2}.\]
The squared error of the truncated series on $\partial O_\rho$ reads
\begin{equation}
\label{eq:thm_bessel_series_approx:error of the truncated series on partial O_rho}
    \|u - u^{(\mu)}\|_{L^2(\partial O_\rho)}^2 = \sum_{|n| > M_h} |c_n|^2 2\pi \rho |J_n(k\rho)|^2.
\end{equation}
Substituting the bound for $|c_n|^2$ and using the asymptotic behavior 
\[|J_n(z)| \sim \frac{1}{\sqrt{ |n|}}(\frac{e|z|}{2|n|})^{|n|},\]
we have the ratio
\begin{equation}
\label{eq:asymptotic ratio of bessel function}
\left| \frac{J_n(k\rho)}{J_n(kd)} \right| \sim \left( \frac{\rho}{d} \right)^{|n|}.
\end{equation}
Therefore, \eqref{eq:thm_bessel_series_approx:error of the truncated series on partial O_rho} is bounded by a geometric series
\begin{equation*}
    \|u - u^{(\mu)}\|_{L^2(\partial O_\rho)}^2 \le C \|u\|_{L^2(\partial O_d)}^2 \sum_{|n| > M_h} \left(\frac{\rho}{d}\right)^{2|n|} \le C' \|u\|_{L^2(\partial O_d)}^2 \left(\frac{\rho}{d}\right)^{2M_h}.
\end{equation*}
By Lemma \ref{lemma:interior stability estimate}, we have 
\begin{equation}
\label{eq:prior estimate for the helmholtz equation}
\|u - u^{(\mu)}\|_{L^2(O_\rho)} \le C_{stab} \|u - u^{(\mu)}\|_{L^2(\partial O_\rho)},
\end{equation}
which yields the error bound.

For the coefficient estimation, since $\{u_n\}$ is orthonormal on $L^2(\partial O_\rho)$, we have $\|\mathbf{\mu}\|_{\ell^2} = \|u^{(\mu)}\|_{L^2(\partial O_\rho)}$. Because $u^{(\mu)}$ is the orthogonal projection, $\|u^{(\mu)}\|_{L^2(\partial O_\rho)} \le \|u\|_{L^2(\partial O_\rho)}$.
Note that by trace theorems and the interior regularity result \cite[Section 6.3]{Evans2010pde}, we have 
\[\|u\|_{L^2(\partial O_{\rho})} \le C \|u\|_{H^1(O_\rho)} \le C' \|u\|_{L^2(\partial O_d)},\]
which yields the final result.
\end{proof}

\begin{theorem}[BB-LbNM Error]\label{thm:BB-LbNM error}
\textit{Under the assumptions of Theorem \ref{thm:bessel series approximation}, let the LbNM algorithm \ref{alg:bb_lbnm} be applied with a regularization parameter $\alpha \sim N(d/\rho)^{-2M_h}$. The resulting numerical solution $u^{(c)}(x) = \mathbf{a}_*(x)\mathbf{f}$ has the error estimate:}
\begin{equation*}
    \| u - u^{(c)} \|_{L^2(\Omega)} \le C \|u\|_{L^2(\partial O_d)} \left( \left(\frac{\rho}{d}\right)^{M_h} + \frac{1}{N} \right).
\end{equation*}
\end{theorem}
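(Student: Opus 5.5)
We split the error through the approximant $u^{(\mu)}$ of Theorem \ref{thm:bessel series approximation}. Since the map $\mathbf{f}\mapsto\mathbf{a}_*(x)\mathbf{f}$ is linear, write $\mathbf{u}$ for the boundary samples of $u$ and $\mathbf{u}^{(\mu)} = V\mathbf{\mu}$ for those of $u^{(\mu)}$. Then
\begin{equation*}
u - u^{(c)} = \bigl(u - u^{(\mu)}\bigr) + \bigl(u^{(\mu)} - \mathbf{a}_*V\mathbf{\mu}\bigr) + \mathbf{a}_*\bigl(\mathbf{u}^{(\mu)} - \mathbf{u}\bigr).
\end{equation*}
The first term is bounded in $L^2(\Omega)\subset L^2(O_\rho)$ directly by \eqref{eq:bb_approx_error}.

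For the second term we have pointwise $u^{(\mu)}(x) - \mathbf{a}_*(x)V\mathbf{\mu} = (\mathbf{b}(x) - \mathbf{a}_*(x)V)\mathbf{\mu}$. Using \eqref{eq:solution_a_dual}, $\mathbf{b} - \mathbf{a}_*V = \mathbf{b}\,\alpha(V^*V+\alpha I)^{-1}$. Set $D = V^*V/N$, which approximates the Gram matrix of the $u_n$ on $\partial\Omega$ with respect to the normalized arclength measure. The residual then becomes $\mathbf{b}\,(\alpha/N)(D+(\alpha/N) I)^{-1}\mathbf{\mu}$. The natural way to control it is to use the representer/minimality property of Tikhonov: $\mathbf{a}_*$ minimizes $\|\mathbf{a}V-\mathbf{b}\|^2+\alpha\|\mathbf{a}\|^2$, so comparing with any competitor $\mathbf{a}$ bounds both $\|\mathbf{a}_*V-\mathbf{b}\|$ and $\sqrt{\alpha}\|\mathbf{a}_*\|$. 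As competitor I would take a quadrature-based reproducing row vector: weights $w_j\,\partial_\nu G(x,x_j)$, with $G$ the Dirichlet Green function of $\Omega$, so that $\mathbf{a}V\approx\mathbf{b}$ up to quadrature error $O(1/N)$ and $\|\mathbf{a}\|\lesssim N^{-1/2}$ with constants depending on $x$. Squaring and integrating over $x\in\Omega$ (keeping $x$ a fixed distance from $\partial\Omega$, or absorbing the boundary layer) gives
\begin{equation*}
\|\mathbf{b}-\mathbf{a}_*V\|\lesssim \tfrac1N+\sqrt{\alpha/N},\qquad \|\mathbf{a}_*\|\lesssim \tfrac{1}{\sqrt{\alpha}\,N}+N^{-1/2}.
\end{equation*}
Multiplying the first bound by $\|\mathbf{\mu}\|_{\ell^2}\le C'\|u\|_{L^2(\partial O_d)}$ from \eqref{eq:thm:bessel series approx:coeffiecient norm} gives a second term of order $(1/N+\sqrt{\alpha/N})\|u\|$. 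With $\alpha\sim N(\rho/d)^{2M_h}$ we get $\sqrt{\alpha/N}\sim(\rho/d)^{M_h}$, which is exactly where the stated choice of $\alpha$ enters.

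For the third term, $|\mathbf{a}_*(\mathbf{u}^{(\mu)}-\mathbf{u})|\le\|\mathbf{a}_*\|\,\|\mathbf{u}^{(\mu)}-\mathbf{u}\|_{\ell^2}$. Since $u-u^{(\mu)}$ is a smooth Helmholtz solution in a neighborhood of $\overline{O_\rho}$, the discrete norm satisfies $\|\mathbf{u}^{(\mu)}-\mathbf{u}\|_{\ell^2}\approx\sqrt{N}\,\|u-u^{(\mu)}\|_{L^2(\partial\Omega)}$. The boundary $L^2$ norm is in turn bounded by $C\|u\|(\rho/d)^{M_h}$, via the tail estimate in the proof of Theorem \ref{thm:bessel series approximation} combined with a trace and interior estimate on a slightly larger disk. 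Using the competitor bound $\|\mathbf{a}_*\|\lesssim N^{-1/2}$ (plus the $\alpha$-dependent piece) then yields a contribution of order $(\rho/d)^{M_h}$. A second alternative to the $\alpha$-dependent piece, if needed, is the trivial bound $\|\mathbf{a}_*\|\le\|\mathbf{b}\|/(2\sqrt{\alpha})$.

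The main obstacle is this last step: the crude bound $\|\mathbf{a}_*\|\le\|\mathbf{b}\|/(2\sqrt\alpha)$ multiplied by $\sqrt N(\rho/d)^{M_h}$ gives only $O(1)$ with the stated $\alpha$. So one really needs the competitor argument to show $\|\mathbf{a}_*\|=O(N^{-1/2})$ uniformly in $M_h$. That bound hinges on the quadrature consistency of the Green-function representation with the normalized Bessel basis, i.e.\ on a uniform $O(1/N)$ quadrature error for $\sum_j w_j\partial_\nu G(x,x_j)u_n(x_j)$ across $|n|\le M_h$. I would first try to prove it assuming $N\gtrsim M_h$ and a smooth boundary parametrization. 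If that fails, I would settle for an $x$-local estimate on compact subsets of $\Omega$ and then extend it to all of $\Omega$ using Lemma \ref{lemma:interior stability estimate}.
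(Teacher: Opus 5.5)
\textbf{There is a genuine gap.} Your three-term split and the bookkeeping are consistent, including $\sqrt{\alpha/N}\sim(\rho/d)^{M_h}$ and the product of the two competitor bounds in the third term. But the argument stands or falls with a competitor row vector $\mathbf{a}(x)$ satisfying $\|\mathbf{a}V-\mathbf{b}(x)\|\lesssim 1/N$ and $\|\mathbf{a}\|\lesssim N^{-1/2}$. These must hold uniformly in $M_h$, and in a form that can be squared and integrated over all of $\Omega$. You do not establish this, and as set up it does not give the stated rate.

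First, a uniform $O(1/N)$ quadrature error for each $u_n$ only yields $\|\mathbf{a}V-\mathbf{b}(x)\|^2=\sum_{|n|\le M_h}|\mathrm{err}_n|^2\lesssim M/N^2$. To remove the factor $M$, you must read this sum as the squared norm of the functional $v\mapsto\sum_j a_j v(x_j)-v(x)$ on the span, normed by $\|v\|_{L^2(\partial O_\rho)}$ (orthonormality), and then invoke interior regularity.

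Second, the weights $w_j\partial_\nu G(x,x_j)$ concentrate as $\delta=\mathrm{dist}(x,\partial\Omega)\to0$. One has $\|\mathbf{a}(x)\|\sim(N\delta)^{-1/2}$, and the quadrature error for the nearly singular kernel is no longer small once $\delta\lesssim1/N$. Integrating your pointwise bounds, this layer alone contributes order $N^{-1/2}$ to your $L^2(\Omega)$ estimate of the second term. The factor $\int\delta^{-1}$ also turns your third term into $\sqrt{\log N}\,(\rho/d)^{M_h}$. A separate construction near $\partial\Omega$ would be needed, and also at corners, since $\partial\Omega$ is only piecewise smooth.

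Your fallback does not close this. Lemma~\ref{lemma:interior stability estimate} controls $L^2(\Omega)$ by data on $\partial\Omega$, so it cannot upgrade an estimate on $\Omega'\Subset\Omega$ to one on $\Omega$.

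\textbf{The missing idea} is to apply Tikhonov minimality in \emph{coefficient} space for the given data, rather than pointwise in $x$. Since $u^{(c)}(x)=\mathbf{b}(x)\mathbf{c}_*$ with $\mathbf{c}_*$ minimizing $\|\mathbf{f}-V\mathbf{c}\|^2+\alpha\|\mathbf{c}\|^2$, the competitor $\mathbf{c}=\mathbf{\mu}$ from Theorem~\ref{thm:bessel series approximation} is already in hand. With $\alpha\sim N(\rho/d)^{2M_h}$ it gives simultaneously $\|\mathbf{f}-V\mathbf{c}_*\|^2\lesssim N(\rho/d)^{2M_h}\|u\|^2_{L^2(\partial O_d)}$ and $\|\mathbf{c}_*\|\lesssim\|u\|_{L^2(\partial O_d)}$.

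You never need $u^{(c)}$ to be close to $u^{(\mu)}$, which is the source of your difficulty with the third term; boundedness is enough. By orthonormality $\|u^{(c)}\|_{L^2(\partial O_\rho)}=\|\mathbf{c}_*\|$, so $u^{(c)}$, like $u$, has bounded gradient on $\partial\Omega$ by interior regularity. This tacitly requires $\partial\Omega$ to stay a positive distance inside $\partial O_\rho$, in the paper's argument as in yours.

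Now split $\partial\Omega$ into arcs of length $O(1/N)$ around the nodes. The node residuals contribute $(\rho/d)^{M_h}$ to $\|u-u^{(c)}\|_{L^2(\partial\Omega)}$, and the oscillation of $u$ and $u^{(c)}$ on each arc contributes $1/N$. Since $u-u^{(c)}$ solves the Helmholtz equation in $\Omega$, Lemma~\ref{lemma:interior stability estimate} transfers this to $L^2(\Omega)$. No Green's function, quadrature, or boundary layer is involved.
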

\begin{proof}
First, we establish the equivalence between the learned operator action and a coefficient fitting problem. By the explicit formula for the regularized solution \eqref{eq:solution_a}, the numerical solution at $x$ is
\begin{equation*}
    u^{(c)}(x) = \mathbf{a}_*(x) \mathbf{f} = \mathbf{b}(x) V^* (V V^* + \alpha I)^{-1} \mathbf{f}.
\end{equation*}
Using the matrix identity $V^* (V V^* + \alpha I)^{-1} = (V^* V + \alpha I)^{-1} V^*$, this can be rewritten as:
\begin{equation*}
    u^{(c)}(x) = \mathbf{b}(x) \mathbf{c}_*, \quad \text{where} \quad \mathbf{c}_* = (V^* V + \alpha I)^{-1} V^* \mathbf{f}.
\end{equation*}
Notice that $\mathbf{c}_*$ is precisely the unique minimizer of the following functional over coefficients $\mathbf{c} \in \mathbb{C}^M$:
\begin{equation}
    \mathbf{c}_* = \underset{\mathbf{c}\in \mathbb{C}^M}{\arg \min} \left\{ \|\mathbf{f} - V\mathbf{c}\|^2 + \alpha \|\mathbf{c}\|^2 \right\}.
    \label{eq:coef_min}
\end{equation}

Now we utilize Theorem \ref{thm:bessel series approximation}. Taking $\mathbf{\mu}$ as a candidate in the minimization problem \eqref{eq:coef_min}, the optimality of $\mathbf{c}_*$ implies
\begin{equation}
\label{eq:optimality}
    \|\mathbf{f} - V\mathbf{c}_*\|^2 + \alpha \|\mathbf{c}_*\|^2 \le \|\mathbf{f} - V\mathbf{\mu}\|^2 + \alpha \|\mathbf{\mu}\|^2.
\end{equation}
The term 
\[\|\mathbf{f} - V\mathbf{\mu}\|^2 = \sum_{j=1}^N |u(x_j) - u^{(\mu)}(x_j)|^2\] 
represents the interpolation error at the boundary points. Let $e(x) = u(x) - u^{(\mu)}(x)$ be the error function that satisfies $\Delta e + k^2 e = 0$ in $O_\rho$.
In light of the Sobolev embedding theorem and the interior estimation for the Helmholtz equation, combined with \eqref{eq:bb_approx_error} we can derive the following estimate:
\begin{equation*}
     \|e\|_{L^\infty(\partial\Omega)} \le C_{emb} \|e\|_{H^s(\Omega)} \le C \|e\|_{L^2(O_\rho)} \le C \|u\|_{L^2(\partial O_d)} \left(\frac{\rho}{d}\right)^{M_h}.
\end{equation*}
Here $s > 1$ such that $H^s(\Omega)$ is continuously embedded into $L^\infty(\Omega)$.
Summing this squared error over the $N$ collocation points yields:
\begin{equation}
\label{eq:discrete approx error of mu}
    \|\mathbf{f} - V\mathbf{\mu}\|^2 = \sum_{j=1}^N |e(x_j)|^2  \le C N \|u\|_{L^2(\partial O_d)}^2 \left(\frac{\rho}{d}\right)^{2M_h}.
\end{equation}
Substituting \eqref{eq:discrete approx error of mu} together with the bound \eqref{eq:thm:bessel series approx:coeffiecient norm} for $\|\mathbf{\mu}\|$ into \eqref{eq:optimality}, and choosing $\alpha \sim N(\rho/d)^{2M_h}$, we have:
\begin{equation*}
    \|\mathbf{f} - V\mathbf{c}_*\|^2 + \alpha \|\mathbf{c}_*\|^2  \le C N \left(\frac{\rho}{d}\right)^{2M_h} \|u\|_{L^2(\partial O_d)}^2.
\end{equation*}
This inequality provides the necessary controls for both terms:
\begin{equation}
\label{eq:discrete bound for the numercial solution}
    \|\mathbf{f} - V\mathbf{c}_*\|^2 \le C N \left(\frac{\rho}{d}\right)^{2M_h} \|u\|_{L^2(\partial O_d)}^2, \quad \text{and} \quad \|\mathbf{c}_*\|^2  \le C  \|u\|_{L^2(\partial O_d)}^2.
\end{equation}

Now, we can estimate the error of the numerical solution $u^{(c)}(x) = \sum_{n=-M_h}^{M_h} c_{*n} u_n(x)$ on the boundary $\partial\Omega$. We introduce a partition of the boundary $\partial \Omega = \cup_{k=1}^N \Gamma_{k}$ according to the collocation points $\{x_k\}_{k=1}^N$, such that $x_k \in \Gamma_k$ and the arc length satisfies $|\Gamma_k| \le {C}/{N}$.
Denote the total error function by $v(x) = u(x) - u^{(c)}(x)$. Our goal is to estimate $\|v\|_{L^2(\partial\Omega)}^2 = \sum_{k=1}^N \|v\|_{L^2(\Gamma_k)}^2$. On each segment $\Gamma_k$, we use the triangle inequality to split the error:
\begin{equation}
\label{eq:thm:error for case 1_split the error}
    \int_{\Gamma_{k}} |v(x)|^2 \dd x \le 2 \int_{\Gamma_{k}} |v(x) - v(x_k)|^2 \dd x + 2 \int_{\Gamma_{k}} |v(x_k)|^2 \dd x.
\end{equation}
The second term in \eqref{eq:thm:error for case 1_split the error} is simply the discrete residual at the collocation point, bounded by the previous step:
\begin{equation}
\label{eq:thm:error for case 1_the approximation error}
    \sum_{k=1}^N \int_{\Gamma_{k}} |v(x_k)|^2 \dd x = \sum_{k=1}^N |\Gamma_k| |v(x_k)|^2 \le \frac{C}{N} \|\mathbf{f} - V\mathbf{c}_*\|^2 \le C \left(\frac{\rho}{d}\right)^{2M_h} \|u\|_{L^2(\partial O_d)}^2.
\end{equation}
For the first term in \eqref{eq:thm:error for case 1_split the error}, we split it into the oscillation of the exact solution and the numerical solution:
\begin{equation}
\label{eq:thm:error for case 1_split the interpolation error}
        \int_{\Gamma_{k}} |v(x) - v(x_k)|^2 \dd x \le 2 \int_{\Gamma_{k}} |u(x) - u(x_k)|^2 \dd x + 2 \int_{\Gamma_{k}} |u^{(c)}(x) - u^{(c)}(x_k)|^2 \dd x.
\end{equation}
Using the Lagrange mean value theorem, we treat the functions as defined on the curve parameterized by arc length. Let $s(x)$ denote the arc length distance of a point $x \in \Gamma_k$ from the collocation point $x_k$. For any $x \in \Gamma_k$, there exists a point $\xi$ on the curve segment between $x$ and $x_k$ such that:
\begin{equation*}
    |u(x) - u(x_k)| \le \left| \frac{\partial u}{\partial \tau}(\xi) \right| |s(x)| \le |\nabla u(\xi)| |s(x)|,
\end{equation*}
where $\frac{\partial u}{\partial \tau}$ denotes the tangential derivative along the boundary.
Integrating over $\Gamma_k$ using arc length measure:
\begin{equation*}
    \int_{\Gamma_{k}} |u(x) - u(x_k)|^2 \dd x \le \max_{x \in \Gamma_k} |\nabla u(x)|^2 \int_{\Gamma_{k}} |s(x)|^2 \dd x.
\end{equation*}
Since the length of the segment is $|\Gamma_k|$, we have $|s(x)| \le |\Gamma_k|$ when $x \in \Gamma_k$. Thus, combining the interior regularity result \cite[Section 6.3]{Evans2010pde} we have:
\begin{equation}
\label{eq:thm:error for case 1_interpolation error of the true solution}
    \int_{\Gamma_{k}} |u(x) - u(x_k)|^2 \dd x \le |\Gamma_k|^3 \max_{x \in \Gamma_k} |\nabla u(x)|^2 \le \frac{C}{N^3} \| u \|_{L^2(\partial O_d)}.
\end{equation}
Similarly for the numerical solution $u^{(c)}$:
\begin{equation*}
    \int_{\Gamma_{k}} |u^{(c)}(x) - u^{(c)}(x_k)|^2 \dd x \le |\Gamma_k|^3 \max_{x \in \Gamma_k} |\nabla u^{(c)}(x)|^2\le \frac{C}{N^3} \| u^{(c)} \|_{L^2(\partial O_{\rho})}.
\end{equation*}
Crucially, the basis functions $\{u_n\}$ are constructed to be orthonormal on $L^2(\partial O_\rho)$. Therefore, the norm of the $u^{(c)}$ on $\partial O_{\rho}$ is exactly the $\ell^2$-norm of its coefficient vector:
\begin{equation*}
    \|u^{(c)}\|_{L^2(\partial O_\rho)} = \left\| \sum_{n=-M_h}^{M_h} c_{*n} u_n \right\|_{L^2(\partial O_\rho)} = \|\mathbf{c}_*\|_{\ell^2}.
\end{equation*}
Using the bound $\|\mathbf{c}_*\|^2 \le C  \|u\|_{L^2(\partial O_d)}^2$ derived in \eqref{eq:discrete bound for the numercial solution}, we obtain:
\begin{equation}
\label{eq:thm:error for case 1_interpolation error of the num solution}
    \int_{\Gamma_{k}} |u^{(c)}(x) - u^{(c)}(x_k)|^2 \dd x  \le \frac{C}{N^3} \| u \|_{L^2(\partial O_d)}.
\end{equation}
Substituting \eqref{eq:thm:error for case 1_interpolation error of the num solution} and \eqref{eq:thm:error for case 1_interpolation error of the true solution} into \eqref{eq:thm:error for case 1_split the interpolation error}, we get the estimation of the first term in \eqref{eq:thm:error for case 1_split the error}. Summing \eqref{eq:thm:error for case 1_split the error} over all $k=1,\dots,N$ and using \eqref{eq:thm:error for case 1_the approximation error} yields:
\begin{equation*}
    \|u - u^{(c)}\|_{L^2(\partial\Omega)}^2  = \|v\|_{L^2(\partial\Omega)}^2\le C \left( \left(\frac{\rho}{d}\right)^{2M_h} + \frac{1}{N^2} \right) \|u\|_{L^2(\partial O_d)}^2.
\end{equation*}
Taking the square root and we derive the estimation:
\begin{equation*}
    \|u - u^{(c)}\|_{L^2(\partial\Omega)} \le C \|u\|_{L^2(\partial O_d)} \left( \left(\frac{\rho}{d}\right)^{M_h} + \frac{1}{N} \right).
\end{equation*}
By Lemma \ref{lemma:interior stability estimate} we get final result.
\end{proof}

\begin{remark}[General Cases]
\label{rmk:general cases}
For a solution $u$ that cannot be continued to a large disk $O_d$, we rely on Lemma \ref{lemma:quantitative runge approximation} to construct an approximation $u_{\epsilon}$ that can be continued to $O_d$. The error analysis follows the trade-off strategy detailed in \cite[Theorem 4.4 and 4.5]{Chen2025}: we balance the Runge approximation error $\varepsilon$ against the growth of the norm $\|u_{\epsilon}\|$ and the stability of the operator.

Let $\delta = (\rho/d)^{-M_h} + \frac{1}{N}$ denote the baseline convergence rate derived in Theorem \ref{thm:BB-LbNM error}. The convergence results for the general cases are:

\begin{enumerate}
    \item  If $u$ can be continued to a domain $\tilde{\Omega}$ slightly larger than $\Omega$ but smaller than $O_d$, the polynomial growth of the Runge approximation \eqref{eq:continuable runge approximation} leads to a Hölder-type error estimate:
    \begin{equation}
        \| u - u^{(c)} \|_{L^2(\partial\Omega)} \le C \|u\|_{H^1(\tilde{\Omega})} \cdot \delta^{\frac{1}{1+\beta}}.
    \end{equation}
    
    \item  If $u$ cannot be continued beyond $\Omega$, the exponential growth of the Runge approximation \eqref{eq:non-continuable runge approximation} leads to a Logarithmic error estimate:
    \begin{equation}
        \| u - u^{(c)} \|_{L^2(\partial\Omega)} \le C \|u\|_{H^1(\Omega)} \cdot \frac{1}{|\ln \delta|^{\frac{1}{\theta}}}.
    \end{equation}
\end{enumerate}
These results confirm that the method converges for all Helmholtz solutions, with the rate determined by the distance to the nearest singularity.
\end{remark}


\section{Numerical Validation}
\label{sec:numerics}

In this section, we present numerical experiments to validate our theoretical claims and demonstrate the practical advantages of the Bessel basis (BB) approach. We perform a direct comparison of the proposed BB-LbNM against the FS-LbNM from our prior work \cite{Chen2025} and the benchmark FEM method. All computations are performed in MATLAB R2023b with an AMD Ryzen 7 7800X3D CPU and 32 GB of RAM.

\subsection{General Experimental Setup}

The objective is to provide a comprehensive comparison of the proposed method against the FS-LbNM and FEM benchmarks, focusing on their performance in dissipative media and complex geometries.

\paragraph{Governing Equation and Physics} We consider the Damped Helmholtz equation to model wave propagation in dissipative media, such as biological tissue or acoustic absorbing liners:
\begin{equation}
    \Delta u + k^2 u = 0, \quad \text{in } \Omega,
    \label{eq:damped_helmholtz}
\end{equation}
where $k = k_r + \mathrm{i}\sigma$ is the complex wavenumber. For the standard benchmarks Test 1 and Test 2, we fix the real part at $k_r = 184.79$ (corresponding to the sound wave in air with a  frequency of 10000 Hz) and vary the damping coefficient $\sigma \ge 0$ to simulate different physical regimes, ranging from lossless $\sigma=0$ to highly dissipative $\sigma \approx 0.2 k_r$. Specific parameters for the exploratory Test 3 will be defined locally in that subsection.

\paragraph{Exact Solutions and Error Metric} To rigorously evaluate the solvers in the comparative benchmarks, we verify the methods against two distinct types of exact solutions representing different physical scenarios:
\begin{itemize}
    \item Case A: Damped Plane Wave.
    \begin{equation*}
        u(x) = e^{\mathrm{i} k x_1} =  e^{\mathrm{i}k_r x_1} e^{-\sigma x_1}.
    \end{equation*}
    This represents a global, smooth wave propagating and decaying in the $x_1$-direction. It is primarily used to test the method's stability against dissipation and its convergence rate for smooth fields.
    
    \item Case B: Damped Dipole Source.
    \begin{equation*}
        u(x) = \Phi_k(\hat{y}_1, x) - \Phi_k(\hat{y}_2, x), \quad \text{where } \Phi_k(y, x) = \frac{\mathrm{i}}{4} H_0^{(1)}(k|x-y|).
    \end{equation*}
    Here, $\Phi_k$ is the fundamental solution with complex wavenumber $k$, and $\hat{y}_{1},\hat{y}_{2}$ are source points located outside the domain. This solution introduces strong gradients and singularities near the boundary, testing the solvers' robustness in reconstructing near-field radiation patterns.
\end{itemize}
The accuracy is quantified using the relative discrete $L^2$ error calculated over a dense grid of interior points with a mesh size of $8 \times 10^{-3}$.

\paragraph{Solver Regularization Strategies} A critical difference between the methods lies in how the regularization parameter $\alpha$ is chosen:
\begin{itemize}
    \item FS-LbNM: Uses the theoretical rule derived in \cite[Theorem 4.2]{Chen2025}, $\alpha \approx MNR^{-2M}$. This represents a carefully optimized baseline.
    \item BB-LbNM: Since the theoretical optimal regularization parameter depends on the radius of analytic continuation $d$, which is typically unknown in practical applications, we employ the Generalized Cross-Validation (GCV) method \cite{Hansen2007} to automatically select $\alpha$. This tests the method's ability to adapt to unknown solution properties without manual tuning.
\end{itemize}

\subsection{Test 1: Stability and Efficiency in Dissipative Media}

\paragraph{Objective} This test compares the stability and efficiency of the BB-LbNM. First, we investigate stability as the physical damping of the medium increases. Theoretically, the fundamental solution $H_0^{(1)}(k r)$ decays exponentially as $e^{-\sigma r}$, which leads to numerical underflow and information loss for the FS-LbNM in high-damping regimes. We verify whether the global nature of the Bessel basis, combined with boundary normalization, overcomes this issue. Second, we compare the BB-LbNM against the FEM to evaluate the computational cost required to achieve high accuracy in the high-frequency regime.

\paragraph{Configuration} We use the rounded kite-shape and flower-shape domains from \cite{Chen2025}. Their parametric equations are detailed in Table \ref{tab:domains}, and the computational setup is illustrated in Figure \ref{fig:setup}.
For the FS-LbNM, the parameters are fixed to the optimized values from \cite{Chen2025}: $N=M=408, R=1.05$ for the kite; $N=M=288, R=1.07$ for the flower. The BB-LbNM uses the same $N$ and $M$ for a fair comparison. The exact solution is Case A (Damped Plane Wave).

\begin{table}[H]
    \centering
    \caption{Parametric equations for the computational domains in Test 1.}
    \label{tab:domains}
    \begin{tabular}{l|l}
        \hline
        \textbf{Shape} & \textbf{Parameter Equation} \\
        \hline
        Rounded kite-shape & 
        \begin{tabular}{@{}l@{}}
            $x=0.5 \cos t+0.3 \cos(2t)-0.2,$ \\
            $y=0.6 \sin t,$ \\
            $t \in [0,2\pi]$
        \end{tabular} \\
        \hline
        Flower-shape & 
        \begin{tabular}{@{}l@{}}
            $x=0.5 \cos t-0.1 \cos(6t)\cos t,$ \\
            $y=0.5 \sin t-0.1 \cos(6t)\sin t,$ \\
            $t \in [0,2\pi]$
        \end{tabular} \\
        \hline
    \end{tabular}
\end{table}

\begin{figure}[htbp]
    \centering
    \begin{subfigure}[b]{0.48\textwidth}
        \centering
        \includegraphics[width=\textwidth]{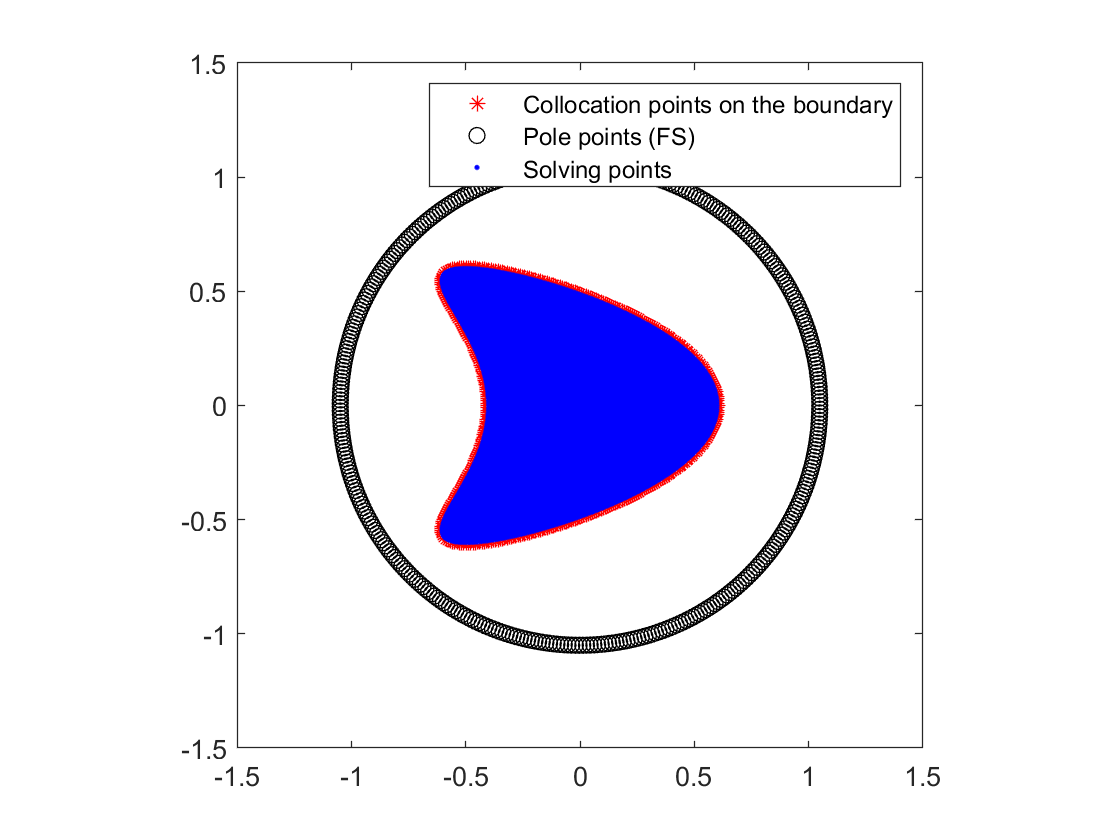}
        \caption{Rounded kite-shape}
        \label{fig:setup_kite}
    \end{subfigure}
    \hfill
    \begin{subfigure}[b]{0.48\textwidth}
        \centering
        \includegraphics[width=\textwidth]{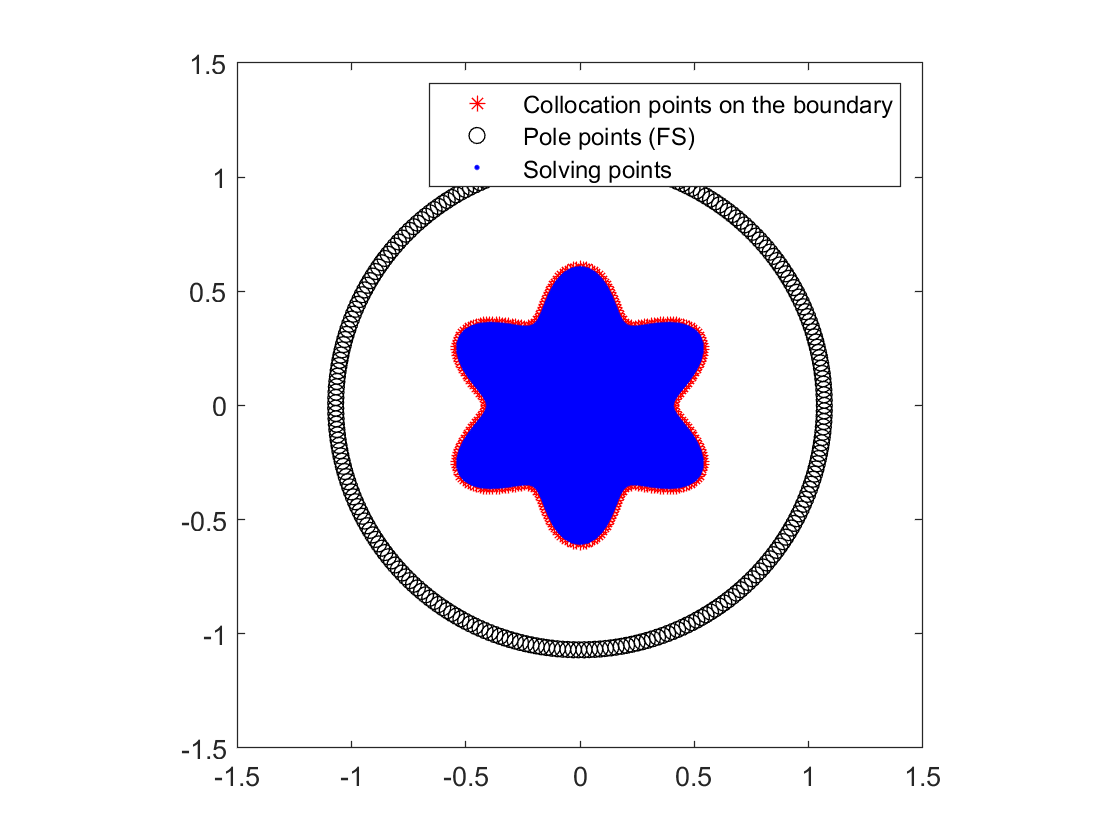}
        \caption{Flower-shape}
        \label{fig:setup_flower}
    \end{subfigure}
    \caption{Illustration of the computational setup for Test 1. The figures show the interior solving points (blue dots), the collocation points on the boundary (red stars), and the exterior pole points used by the benchmark FS-LbNM (black circles).}
    \label{fig:setup}
\end{figure}

\subsubsection{Stability against Damping Variation}
\label{sec:Stability against Damping Variation}
We vary the damping ratio $\sigma/k_r$ from $0\%$ to $20\%$. Table \ref{tab:stability_results} summarizes the results.

\begin{table}[H]
    \centering
    \caption{Stability test under varying damping levels.}
    \label{tab:stability_results}
    \begin{tabular}{l|cc|cc}
        \hline
        & \multicolumn{2}{c|}{\textbf{Kite Geometry}} & \multicolumn{2}{c}{\textbf{Flower Geometry}} \\
        \textbf{Damping} & \textbf{FS Error} & \textbf{BB Error} & \textbf{FS Error} & \textbf{BB Error} \\
        \hline
        0\%    & $1.61 \times 10^{-7}$ & $1.06 \times 10^{-12}$ & $4.09 \times 10^{-6}$ & $3.62 \times 10^{-12}$ \\
        1\%              & $1.28 \times 10^{-7}$ & $1.03 \times 10^{-11}$ & $1.66 \times 10^{-5}$ & $5.64 \times 10^{-12}$ \\
        5\%              & $1.21 \times 10^{-6}$ & $1.21 \times 10^{-11}$ & $2.66 \times 10^{-5}$ & $5.17 \times 10^{-12}$ \\
        20\%  & $3.10 \times 10^{-1}$ & $2.80 \times 10^{-11}$ & $1.00 \times 10^{+0}$ & $1.98 \times 10^{-11}$ \\
        \hline
    \end{tabular}
\end{table}

The numerical results confirm the instability mechanism discussed previously. As shown in Figure \ref{fig:damped_stability}, the error of the FS-LbNM increases by six orders of magnitude as the damping coefficient grows. This supports our analysis that the exponential decay of the fundamental solution ($e^{-\sigma r}$) leads to numerical underflow for source points far from the boundary, resulting in signal loss.

In comparison, the BB-LbNM maintains accuracy ($< 10^{-11}$) for all damping levels. This verifies that the Bessel basis does not suffer from damping. Consequently, the method preserves physical information within the domain and remains stable in dissipative media.

\begin{figure}[htbp]
    \centering
    \begin{subfigure}[b]{0.48\textwidth}
        \centering
        \includegraphics[width=\textwidth]{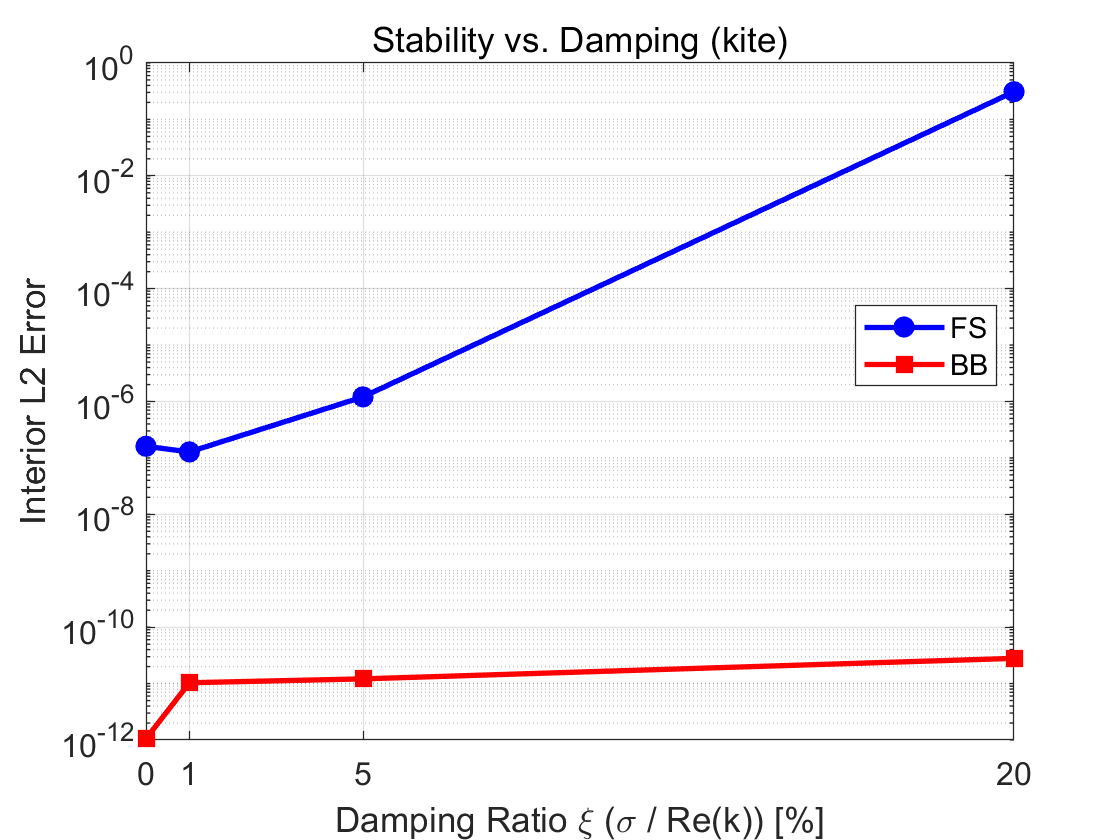}
        \caption{Kite Geometry}
    \end{subfigure}
    \hfill
    \begin{subfigure}[b]{0.48\textwidth}
        \centering
        \includegraphics[width=\textwidth]{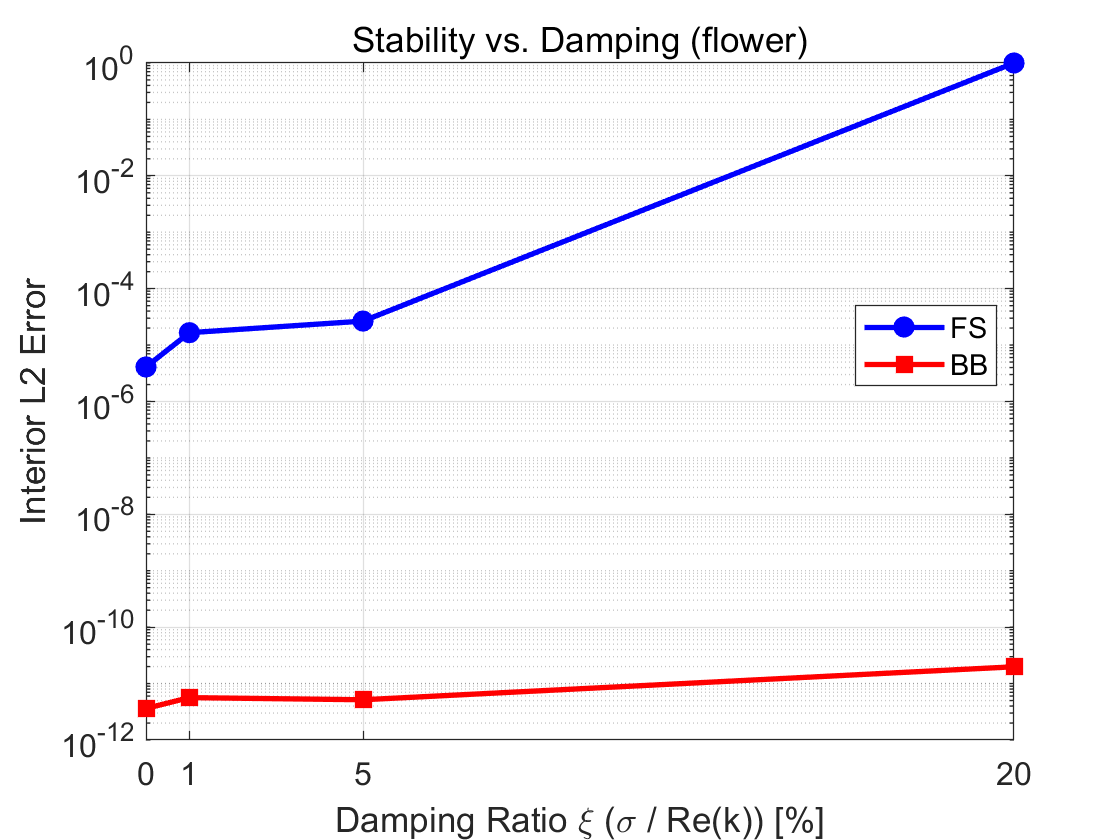}
        \caption{Flower Geometry}
    \end{subfigure}
    \caption{Error vs. Damping Coefficient ($\sigma$). The BB-LbNM (Red) remains stable regardless of dissipation, while the FS-LbNM (Blue) degrades rapidly in high-loss media.}
    \label{fig:damped_stability}
\end{figure}

\subsubsection{Convergence Rate}
The experimental setup follows the Kite geometry case from Section \ref{sec:Stability against Damping Variation}. To analyze the convergence behavior, we fix $N=1000$ and vary $M$ from 180 to 300 under both lossless ($0\%$) and damped ($5\%$) conditions. Figure \ref{fig:convergence_rate} plots the error decay.

A linear fit to the semi-logarithmic data reveals the following convergence slopes:
\begin{itemize}
    \item Lossless (0\%): FS slope $-0.0816$ vs. BB slope $-0.1630$.
    \item Damped (5\%): FS slope $-0.0331$ vs. BB slope $-0.1730$.
\end{itemize}
The BB-LbNM exhibits significantly faster convergence in both cases. Notably, the damping degrades the convergence of the FS method, while the BB method's convergence rate remains robust, demonstrating its superior approximation power in dissipative media.

\begin{figure}[htbp]
    \centering
    \includegraphics[width=0.6\textwidth]{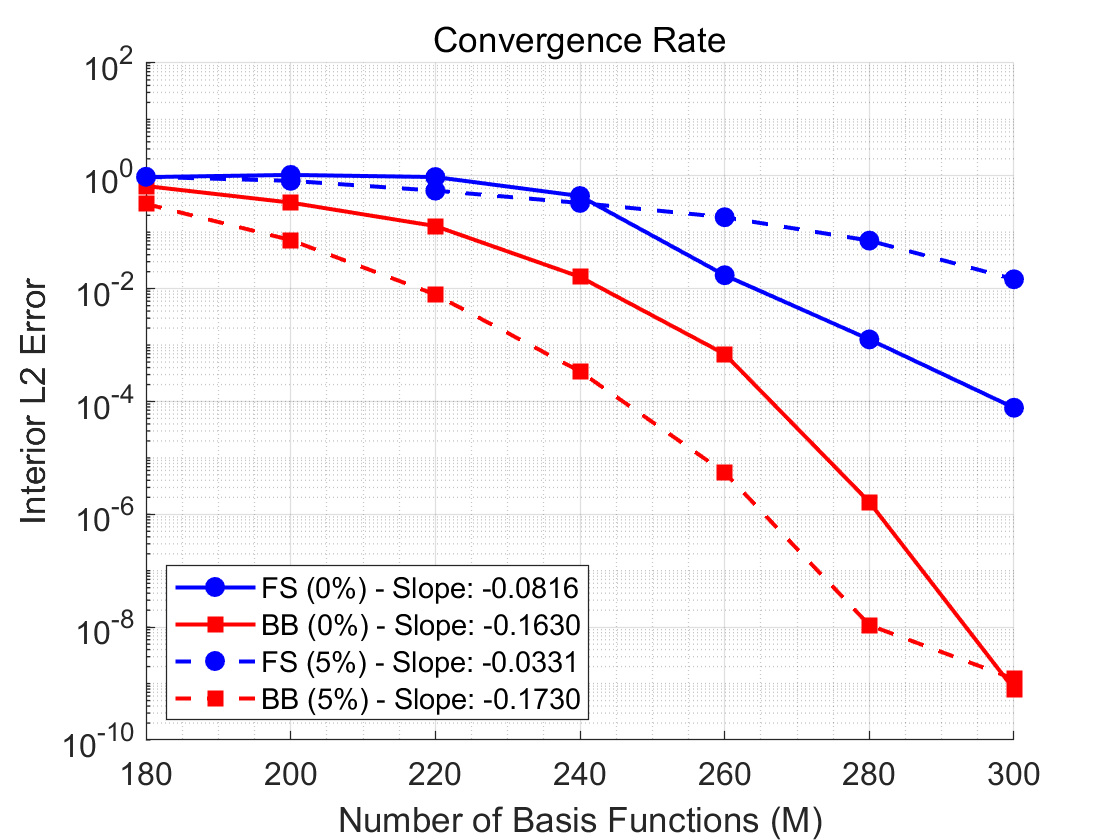}
    \caption{Convergence rate comparison for rounded-kite domain.}
    \label{fig:convergence_rate}
\end{figure}

\subsubsection{Computational Efficiency vs. FEM}
We compare the computational cost of the BB-LbNM against a standard FEM solver. The physical configuration, including the Kite geometry, the Case A exact solution, and the damping coefficient $\sigma \approx 5\%$, remains identical to the setup in Section \ref{sec:Stability against Damping Variation}.

To generate the efficiency curves, we vary the discretization levels for both methods. For the BB-LbNM, we fix the number of boundary collocation points at $N=600$ and increase the number of basis functions $M$ from 180 to 340. For the FEM, we refine the mesh by varying the maximum element size $H_{max}$ through the sequence $\{0.02, 0.01, 0.005, 0.0025, 0.0015\}$, which corresponds to a progressively higher number of degrees of freedom.

For the LbNM, we report two distinct time metrics:
\begin{itemize}
    \item Offline LbNM: This phase encompasses all pre-computations required to construct the solution operator. It includes evaluating the Bessel functions at both the boundary collocation points and the dense grid of internal target points, performing Generalized Cross-Validation to select the regularization parameter, and assembling the inverse operator. It is worth noting that evaluating the special functions at the massive number of internal points constitutes the dominant portion of this computational cost. However, since the Bessel basis functions are defined globally on the ambient disk, these internal evaluations are independent of the specific domain boundary. Consequently, they can be pre-computed and stored, allowing for significant acceleration in subsequent computations even if the domain geometry changes.
    \item Online LbNM: This measures the cost to solve for a new boundary condition once the operator is constructed. This step consists solely of a matrix-vector multiplication.
\end{itemize}

Figure \ref{fig:efficiency} plots the Error vs. CPU Time.

\begin{figure}[htbp]
    \centering
    \includegraphics[width=0.6\textwidth]{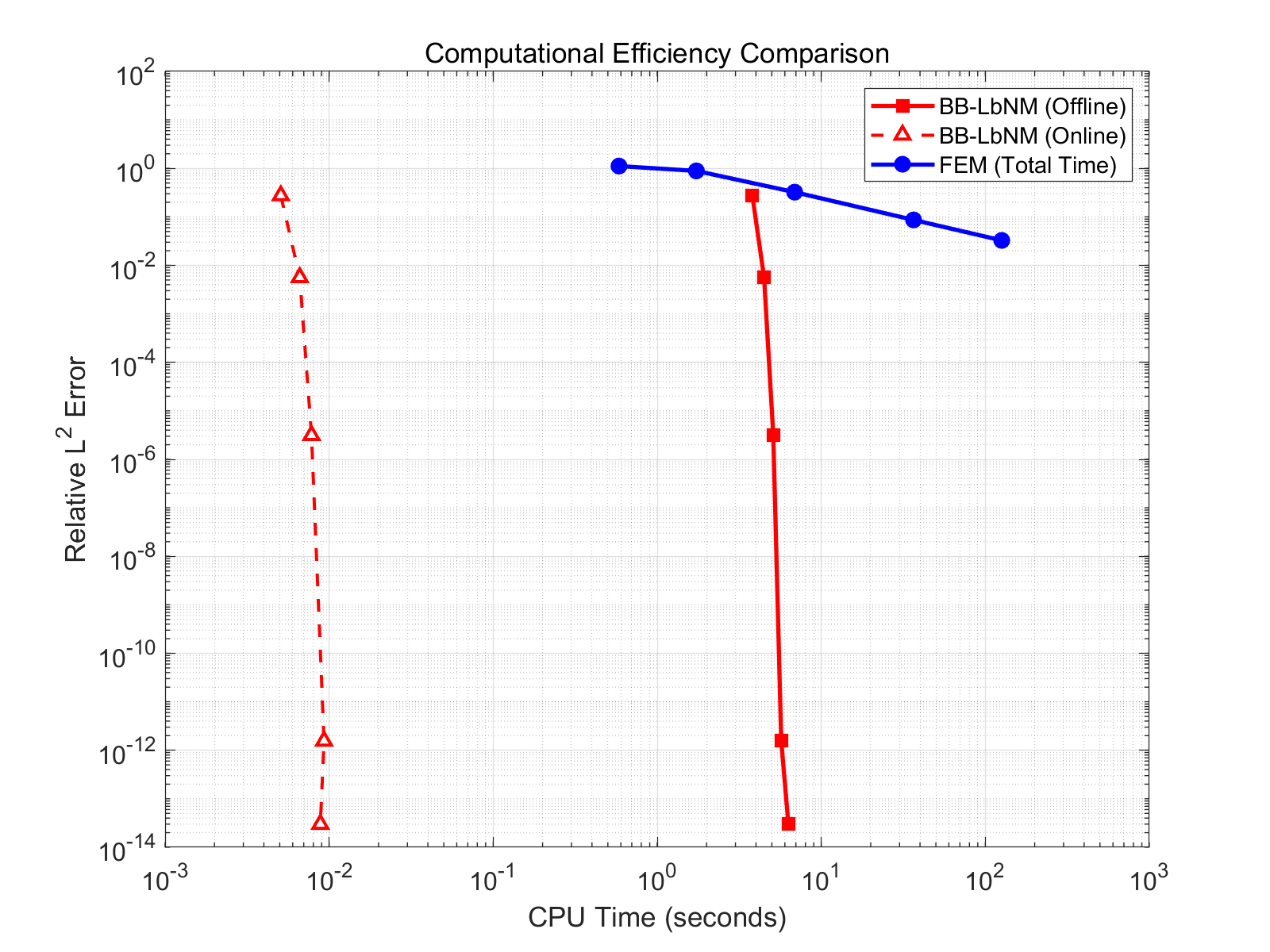}
    \caption{Computational Efficiency Comparison. The BB-LbNM (Red) achieves machine precision ($10^{-14}$) in seconds, whereas the FEM (Blue) struggles to reach $10^{-2}$ even after 120 seconds. The dashed line shows the extremely fast Online evaluation time for BB-LbNM.}
    \label{fig:efficiency}
\end{figure}

The results highlight the dramatic efficiency advantage of the BB-LbNM over the FEM in high-frequency regime. With $M=300$ basis functions, the BB-LbNM achieves an error of $1.56 \times 10^{-12}$ in just 5.7 seconds. In contrast, the FEM requires a very fine mesh ($H_{max}=0.0015$, corresponding to approximately $4.5 \times 10^5$ degrees of freedom) and 126 seconds of computation time to reach an error of only $3.27 \times 10^{-2}$.

Furthermore, the method exhibits exceptional speed in the online phase. Once the solution operator is learned, the BB-LbNM can solve for a new boundary condition in approximately $0.009$ seconds, as indicated by the dashed line in Figure \ref{fig:efficiency}. This capability makes the method ideal for real-time applications or inverse problems requiring repeated evaluations.

\subsection{Test 2: Robustness on Complex Geometries}

\paragraph{Objective} We evaluate the generalizability of the method to complex, engineering-relevant geometries: a Star, a Cabin, and a Plane. We benchmark the proposed BB-LbNM against both the FS-LbNM and the standard FEM. Specifically, we aim to demonstrate: 
\begin{enumerate}
\item The adaptability of the Bessel basis compared to the fixed source configuration of the FS-LbNM.
\item The superior accuracy of the spectral approach compared to the FEM, which suffers from significant pollution errors in the high-frequency regime.
\end{enumerate}

\paragraph{Configuration} The medium is set to $\sigma = 10$ (approx. 5\% damping). We test both exact solutions: Case A (Plane Wave) and Case B (Dipole). The complex geometries and the dipole location for the are shown in Figure \ref{fig:geo_setup}. We fix $N=M=400$ for all cases. The FEM mesh size is taken as $H_{max}=0.001$.

\begin{figure}[htbp]
    \centering
    \includegraphics[width=\textwidth]{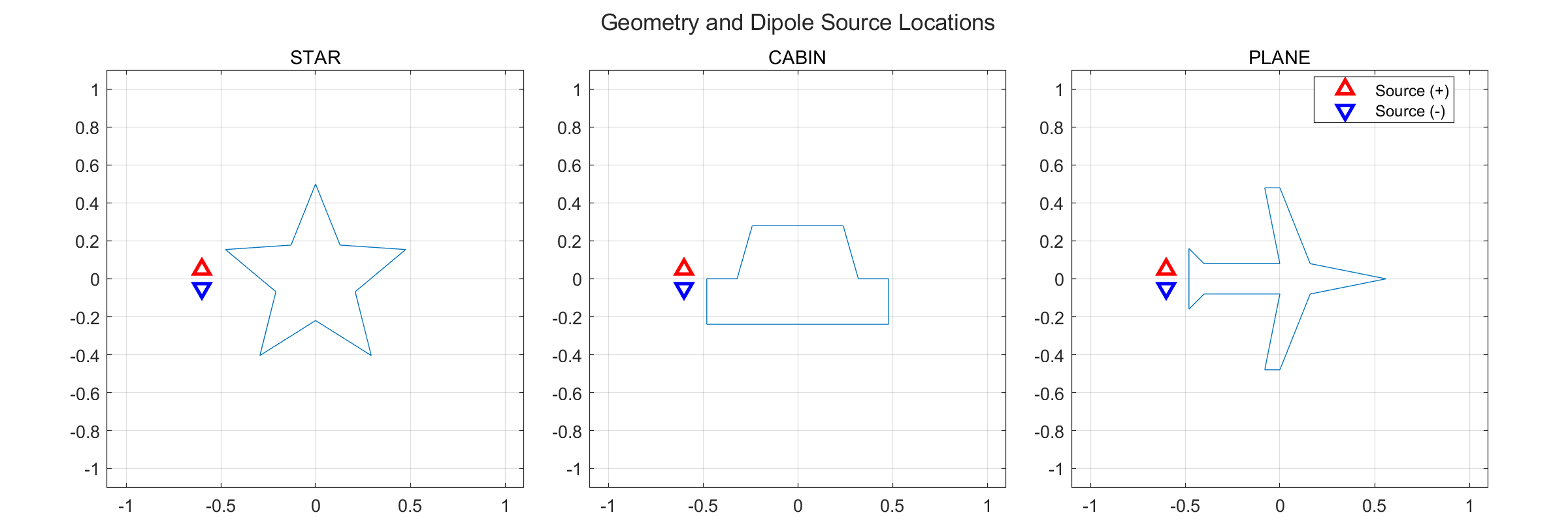}
    \caption{Complex geometries for Test 2: Star, Cabin, and Plane.}
    \label{fig:geo_setup}
\end{figure}

\paragraph{Results} Table \ref{tab:complex_results} summarizes the performance.

\begin{table}[H]
    \centering
    \caption{Robustness test on complex geometries. Comparison of BB-LbNM, FEM, and FS-LbNM.}
    \label{tab:complex_results}
    \begin{tabular}{llcc}
        \hline
        \textbf{Scenario} & \textbf{Method} & \textbf{$L^2$ Error} & \textbf{Time (s)} \\
        \hline
        Star (PlaneWave) 
            & BB-LbNM & $4.51 \times 10^{-14}$ & 0.87 \\
            & FEM     & $1.24 \times 10^{-2}$  & 16.3 \\
            & FS-LbNM & $2.10 \times 10^{-4}$  & --   \\
        \cline{2-4} 
        Star (Dipole1)   
            & BB-LbNM & $2.58 \times 10^{-13}$ & 0.92 \\
            & FEM     & $1.21 \times 10^{-2}$  & 16.4 \\
            & FS-LbNM & $2.47 \times 10^{-3}$  & --   \\
        \hline
        Cabin (PlaneWave)
            & BB-LbNM & $1.74 \times 10^{-14}$ & 1.15 \\
            & FEM     & $1.35 \times 10^{-2}$  & 19.6 \\
            & FS-LbNM & $9.25 \times 10^{-5}$  & --   \\
        \cline{2-4}
        Cabin (Dipole1)  
            & BB-LbNM & $1.34 \times 10^{-12}$ & 1.17 \\
            & FEM     & $1.37 \times 10^{-2}$  & 19.2 \\
            & FS-LbNM & $8.53 \times 10^{-4}$  & --   \\
        \hline
        Plane (PlaneWave)
            & BB-LbNM & $5.24 \times 10^{-13}$ & 0.79 \\
            & FEM     & $1.22 \times 10^{-2}$  & 12.1 \\
            & FS-LbNM & $1.93 \times 10^{-4}$  & --   \\
        \cline{2-4}
        Plane (Dipole1)  
            & BB-LbNM & $4.31 \times 10^{-10}$ & 0.79 \\
            & FEM     & $1.24 \times 10^{-2}$  & 11.9 \\
            & FS-LbNM & $1.51 \times 10^{-3}$  & --   \\
        \hline
    \end{tabular}
\end{table}

 The BB-LbNM demonstrates remarkable robustness, achieving errors between $10^{-10}$ and $10^{-14}$ across all geometries. In contrast, the FS-LbNM error remains limited to the range of $10^{-3} \sim 10^{-4}$.

This performance gap highlights the ``geometric mismatch" issue inherent to the FS method. For irregular geometries such as the Star and the Plane, the fixed circular source configuration is geometrically inefficient. The radial distance from the boundary to the source points varies drastically. This spatial non-uniformity prevents the fixed circular source configuration from providing a balanced approximation across the entire boundary.

Furthermore, the standard FEM achieves only moderate accuracy ($10^{-2}$), which is significantly worse than both LbNM approaches. This degradation is characteristic of the pollution effect (dispersion error) inherent to grid-based methods in the high frequency regime. A fundamental advantage of the LbNM framework is that the basis functions satisfy the governing equation exactly. Consequently, the numerical solution is free from interior dispersion errors. While both LbNM methods surpass the FEM, the BB-LbNM further excels by utilizing a global basis intrinsic to the coordinate system, allowing it to naturally adapt to complex shapes without user intervention.

\subsection{Test 3: Geometric Adaptivity via Multi-Center Expansion}

In the final experiment, we investigate the performance of the BB-LbNM on a non-star-shaped geometry with significant radial disparity. This test evaluates whether the method can maintain stability when the domain geometry imposes severe constraints on a single-center expansion, and demonstrates a multi-center strategy to overcome these limitations.

\paragraph{The Geometric Challenge}
We consider a C-shaped domain, defined by a centerline radius $R_c=1.0$ and width $w=0.4$, which wraps around the origin. The geometric setup is shown in Figure \ref{fig:multicenter_geo_only}.

\begin{figure}[htbp]
    \centering
    \includegraphics[width=0.6\textwidth]{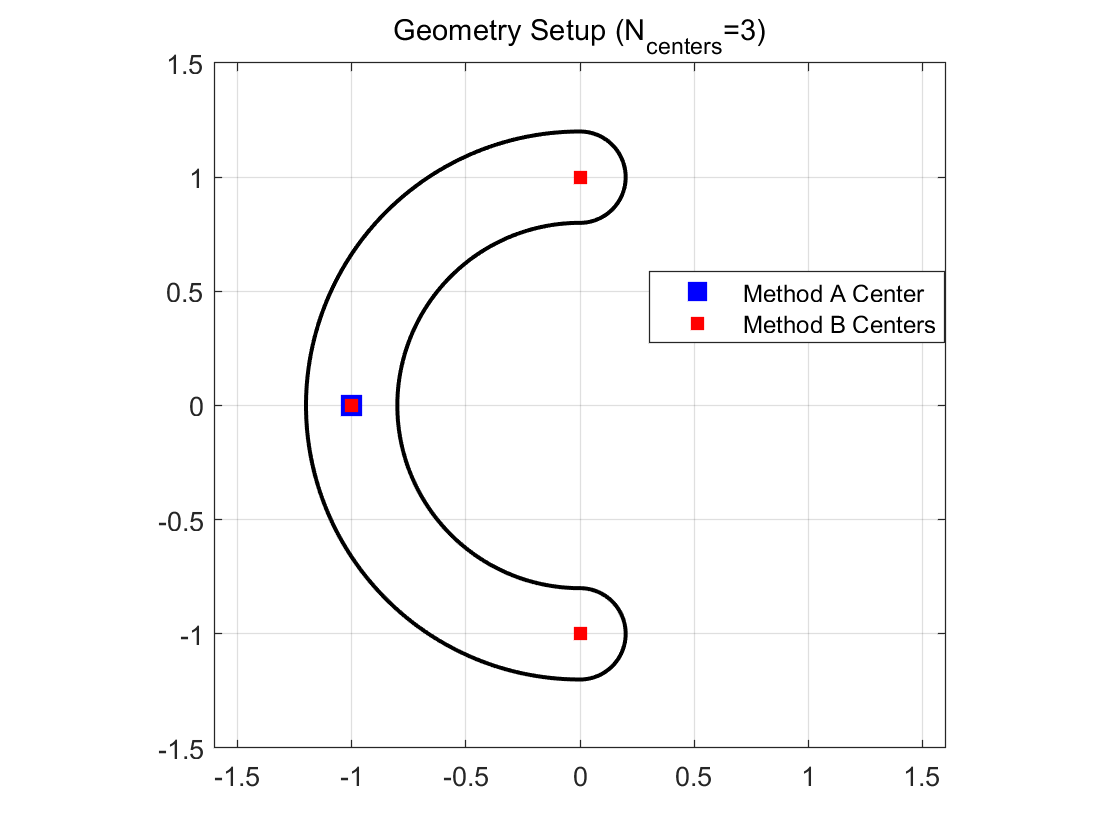}
    \caption{Geometry for Test 3. The blue square represents the single center (Method A), while red squares indicate the three centers used for the Multi-Center strategy (Method B).}
    \label{fig:multicenter_geo_only}
\end{figure}

Similar to the analysis of the Rayleigh hypothesis \cite{Millar1973Rayleigh}, a single-center expansion converges only within the maximal disk extending to the nearest singularity of the solution's analytic continuation. For this non-convex geometry, the singularity may locate within the convex hull limits the convergence radius of an interior center, leaving the domain tips outside the effective convergence region.

Even in the absence of singularities, the significant difference in radial distances creates a barrier for simultaneous approximation. For a single center placed at $(-1, 0)$, the magnitude of high-order Bessel functions at the "back" of the shape ($r_{min} \approx 0.2$) is drastically smaller than at the "tips" ($r_{max} \approx 1.6$). Using the asymptotic ratio derived in Section \ref{sec:theory}, we have:
\begin{equation}
    \left| \frac{J_n(k r_{min})}{J_n(k r_{max})} \right|  \approx \frac{1}{8^{|n|}}.
\end{equation}
For large $n$, this ratio drops far below machine precision. Consequently, the high-order basis functions required to resolve the wave field at the tips are numerically negligible at the back, making it impossible to accurately approximate the boundary conditions at both locations simultaneously.

\paragraph{Strategy}
To resolve this, we adopt a Multi-Center Strategy. This approach mimics the Generalized Multipole Technique (GMT) \cite{Hafner1990GMT}, utilizing local basis functions to target specific geometric features. By distributing expansion centers, we decompose the global approximation problem into local sub-problems, significantly reducing the required order of Bessel functions for each center and avoiding numerical underflow.

To ensure a rigorous comparison of efficiency, we fix the Total Degrees of Freedom (DOF) at $M_{total}=600$ for both methods:
\begin{itemize}
    \item Method A (Single-Center): A single expansion center at $\mathbf{c}_{SC} = (-1, 0)$ with $M=600$ basis functions.
    \item Method B (Multi-Center): Three centers located at $\mathbf{c}_1=(-1, 0)$, $\mathbf{c}_2=(0, 1)$, and $\mathbf{c}_3=(0, -1)$. The DOF is split evenly ($M=200$ per center).
\end{itemize}

\paragraph{Configuration}
We test with a mildly-damped wavenumber $k=50+2i$. Two different boundary conditions (BCs) are applied, as visualized in Figure \ref{fig:multicenter_bcs}:
\begin{enumerate}
    \item External Pulse:
    \begin{equation}
    \begin{split}
        f(x,y) &= \exp\left(-20\left((x - 0.2)^2 + (y - 1)^2\right)\right) \\
               &\quad + \exp\left(-20\left((x + 1.2)^2 + y^2\right)\right).
    \end{split}
    \end{equation}
    This simulates localized excitations near and away from the single center, forcing the basis to capture features at different radial distances simultaneously. 
    \item Constant: $f(x) = 1$. It forces the solver to synthesize a constant boundary profile through highly oscillatory basis functions, serving as a test of the basis's global approximation capability.
\end{enumerate}

\begin{figure}[htbp]
    \centering
    \begin{subfigure}[b]{0.48\textwidth}
        \centering
        \includegraphics[width=\textwidth]{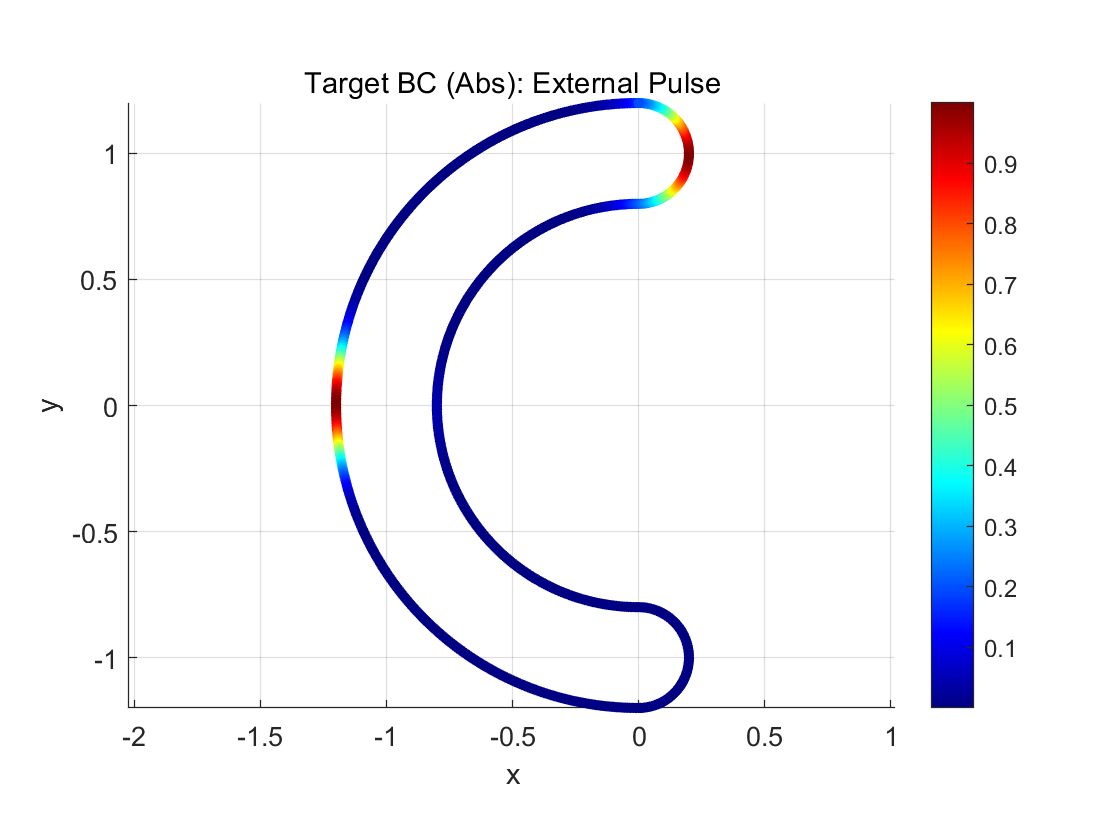}
        \caption{BC: External Pulse}
        \label{fig:bc_pulse}
    \end{subfigure}
    \hfill
    \begin{subfigure}[b]{0.48\textwidth}
        \centering
        \includegraphics[width=\textwidth]{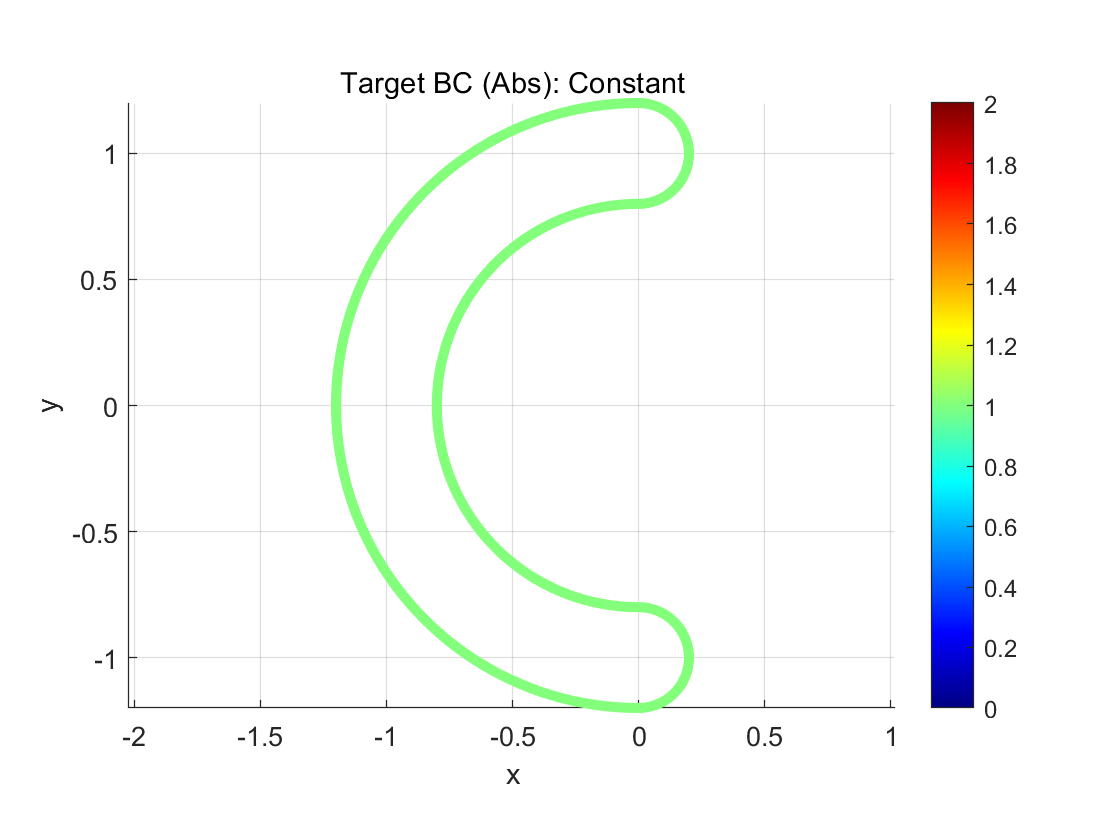}
        \caption{BC: Constant}
        \label{fig:bc_const}
    \end{subfigure}
    \caption{Visualization of the boundary conditions for Test 3. (a) The External Pulse BC simulates localized sources at the domain extremities. (b) The Constant BC requires a uniform value across the entire boundary.}
    \label{fig:multicenter_bcs}
\end{figure}

The regularization parameter $\alpha$ is selected via GCV for all cases.

\paragraph{Results}
The results are summarized in Table \ref{tab:multicenter_results} and the boundary error distributions are shown in Figure \ref{fig:multicenter_error}.

\begin{table}[H]
    \centering
    \caption{Comparison of Single-Center vs. Multi-Center BB-LbNM on the C-Shape ($M_{total}=600$).}
    \label{tab:multicenter_results}
    \begin{tabular}{l|l|c}
        \hline
        \textbf{BC Type} & \textbf{Method} & \textbf{$L^2$ Error}  \\
        \hline
        External Pulse & Single-Center & $3.27 \times 10^4$  \\
                  & Multi-Center  & $7.07 \times 10^{-2}$  \\
        \hline
        Constant  & Single-Center & $1.36 \times 10^4$  \\
                  & Multi-Center  & $4.49 \times 10^{-2}$  \\
        \hline
    \end{tabular}
\end{table}

\begin{figure}[htbp]
    \centering
    \begin{subfigure}[b]{0.48\textwidth}
        \centering
        \includegraphics[width=\textwidth]{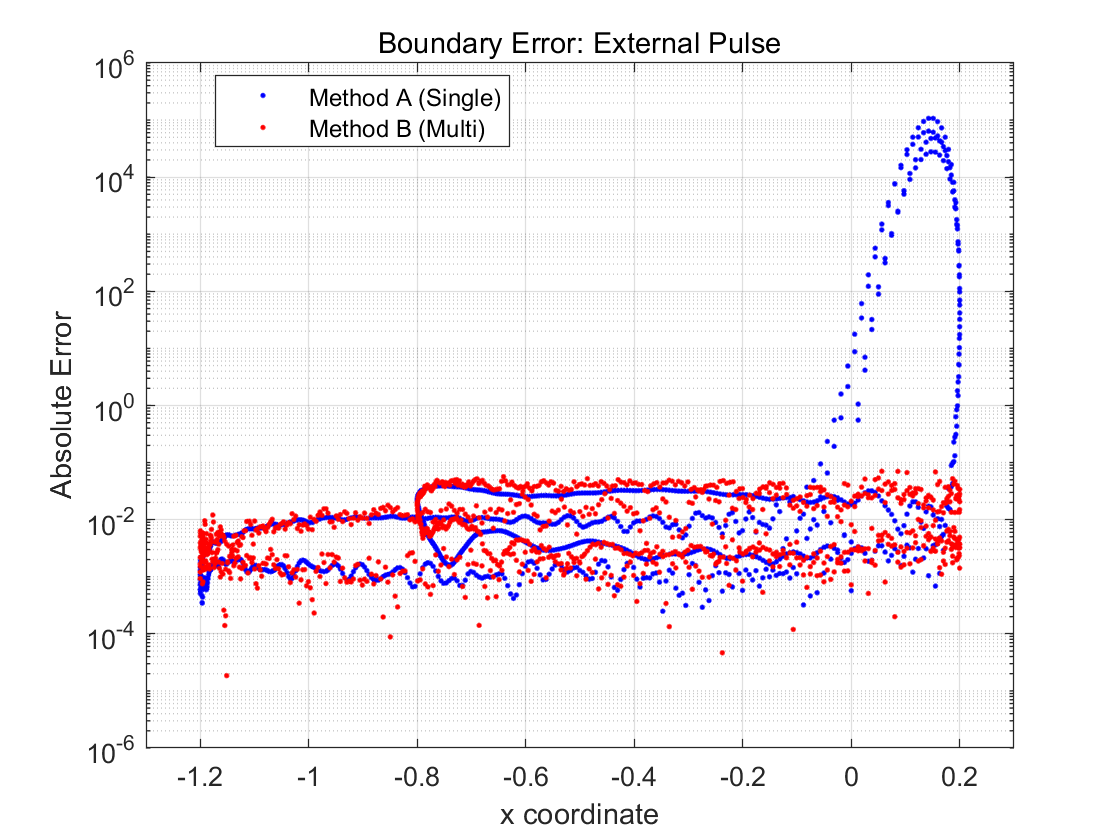}
        \caption{Error: Tip Pulse}
    \end{subfigure}
    \hfill
    \begin{subfigure}[b]{0.48\textwidth}
        \centering
        \includegraphics[width=\textwidth]{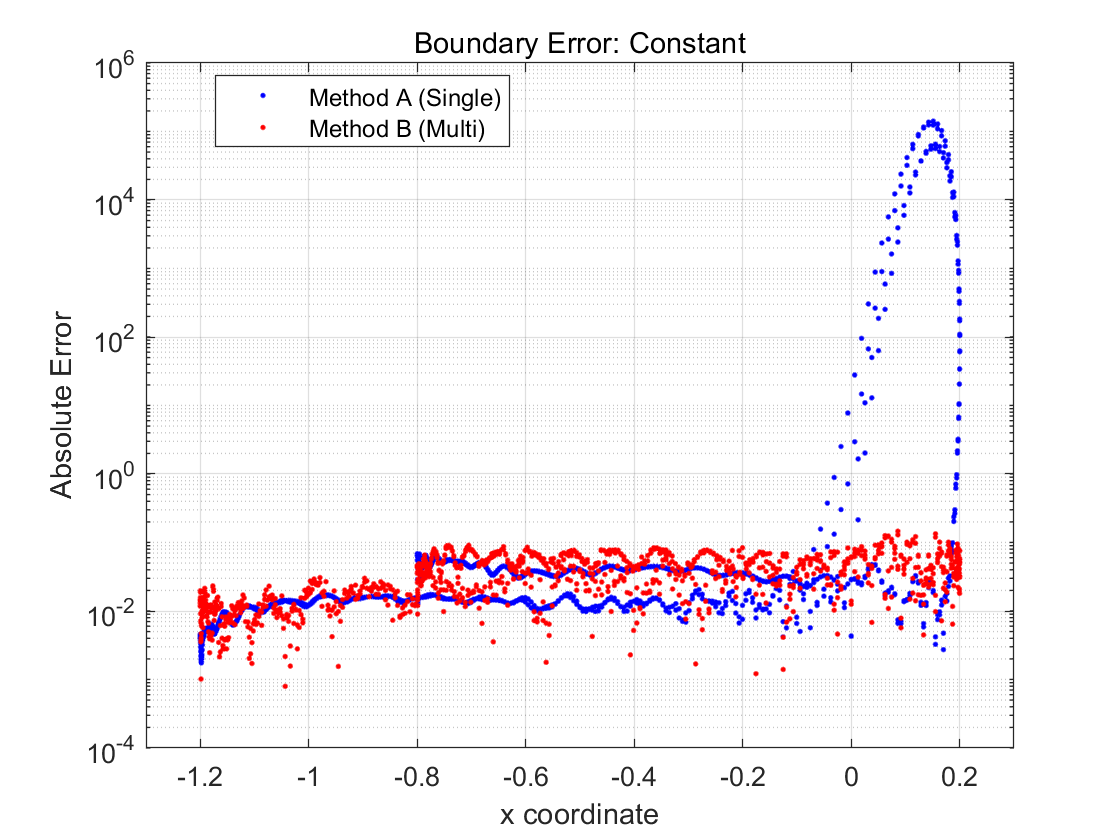}
        \caption{Error: Constant}
    \end{subfigure}
    \caption{Boundary error comparison. The Single-Center approach (Blue dots) exhibits divergence at the domain tips ($x>0$). The Multi-Center strategy (Red dots) successfully reconstructs the field across the entire boundary.}
    \label{fig:multicenter_error}
\end{figure}

The results demonstrate the practical necessity of the Multi-Center strategy for geometries with significant radial disparity. The Single-Center method yields errors of magnitude $10^4$. As visualized in Figure \ref{fig:multicenter_error}, the error remains low near the expansion center but explodes at the domain tips.

As analyzed previously, this failure results from the drastic magnitude difference of the basis functions between the near and far fields. The high-order terms required to resolve the tips ($r_{max}$) are numerically negligible at the back ($r_{min}$), making it impossible to simultaneously fit the boundary conditions at these disparate distances.

In contrast, the Multi-Center strategy reduces the error to $10^{-2}$. By distributing centers along the centerline, the method decouples the global problem into local neighborhoods where the radial dynamic range remains stable. This effectively bypasses the numerical scaling issue, ensuring robust approximation across the entire boundary.

\section{Conclusion}
\label{sec:conclusion}

In this work, we have addressed the critical computational challenges associated with solving the high-frequency Helmholtz equation in dissipative media and complex domains. We proposed and analyzed a learning-based numerical method using a Bessel basis (BB-LbNM), positioning it as a robust alternative to both the fundamental solution-based method (FS-LbNM) and the standard Finite Element Method (FEM).

The proposed BB-LbNM overcomes the fundamental limitations of existing approaches through its intrinsic properties. 
First, we identified that the FS-LbNM suffers from irreversible signal loss in dissipative media due to the exponential decay of its kernel, and from geometric mismatch errors on complex shapes due to its fixed source configuration. The Bessel basis, representing global standing waves, is robust to dissipation and naturally adaptable to arbitrary domains.
Second, we derived a rigorous convergence theory showing that the error is governed by the term $(d/\rho)^{-M_h}$. This rate depends solely on the intrinsic length scales of the problem, thereby eliminating the sensitive, user-defined geometric hyperparameters required by the FS approach.

Numerical experiments provided decisive validation of these claims:
\begin{enumerate}
    \item Stability in Dissipation: The BB-LbNM maintains machine-precision accuracy in highly dissipative regimes where the FS-LbNM fails due to numerical underflow.
    \item Geometric Robustness: The BB-LbNM successfully adapts to complex non-convex engineering geometries, without requiring manual geometric tuning.
    \item Efficiency versus FEM: By utilizing exact solutions of the governing equation, the BB-LbNM eliminates the pollution effect, achieving significantly higher accuracy than the FEM with a greatly reduced computational cost.
    \item Framework Extensibility: By employing a multi-center strategy, the method successfully overcomes the convergence limitations in non-star-shaped domains caused by radial disparities, demonstrating the framework's capability to handle topologically complex geometries.
\end{enumerate}

 Future work will focus on extending the method to three-dimensional problems using spherical Bessel functions, applying the framework to more general boundary conditions, and developing adaptive algorithms that rigorously analyze local singular behaviors (e.g., at corners or interfaces) to construct optimally enriched basis sets.

In conclusion, the Bessel basis offers a theoretically sound, numerically robust, and operationally simple foundation for learning-based Helmholtz solvers, establishing it as a superior choice for high-frequency engineering applications.


\section*{CRediT authorship contribution statement}

\textbf{Lifu Song:}  Methodology, Investigation, Formal analysis, Writing - original draft.
\textbf{Tingyue Li:} Methodology, Investigation, Formal analysis,  Writing - review \& editing. 
\textbf{Jin Cheng:} Supervision, Methodology, Funding acquisition, Formal analysis, Conceptualization, Writing - review \& editing.

\section*{Declaration of competing interest}

The authors declare that they have no known competing financial interests or personal relationships that could have appeared to influence the work reported in this paper.

\section*{Data availability}

No data were used for the research described in the article.

\section*{Acknowledgement}

This work is supported by National Key Research and Development Programs of China (Nos. 2024YFA1012401), National Natural Science Foundation of China (Nos. 12126601, 12201386, 12241103), Science and Technology Commission of Shanghai Municipality (23JC1400501), the Fundamental Research Funds for the Central Universities (No. 2025110603).

Thanks are due to Dr.Yu Chen from Shanghai University of Finance and Economics for valuable discussions.


\bibliography{references.bib}

@article{Chen2025,
  AUTHOR = {Chen, Yu and Cheng, Jin and Li, Tingyue and Miao, Yun},
     TITLE = {A learning based numerical method for {H}elmholtz equations
              with high frequency},
   JOURNAL = {J. Comput. Phys.},
  FJOURNAL = {Journal of Computational Physics},
    VOLUME = {520},
      YEAR = {2025},
     PAGES = {Paper No. 113478, 20},
      ISSN = {0021-9991,1090-2716},
   MRCLASS = {65N80 (35J05 65K10)},
  MRNUMBER = {4806464},
       DOI = {10.1016/j.jcp.2024.113478},
       URL = {https://doi.org/10.1016/j.jcp.2024.113478},
}

@article{RulandSalo2019,
  author = {Rüland, Angkana and Salo, Mikko},
    title = {Quantitative Runge Approximation and Inverse Problems},
    journal = {International Mathematics Research Notices},
    volume = {2019},
    number = {20},
    pages = {6216-6234},
    year = {2018},
    month = {01},
    issn = {1073-7928},
    doi = {10.1093/imrn/rnx301},
    url = {https://doi.org/10.1093/imrn/rnx301},
}

@book{abramowitz1964handbook,
  TITLE = {Handbook of mathematical functions, with formulas, graphs and
              mathematical tables},
    SERIES = {National Bureau of Standards Applied Mathematics Series},
    VOLUME = {No. 55},
    EDITOR = {Abramowitz, Milton and Stegun, Irene A.},
      NOTE = {Fifth printing, with corrections,
              National Bureau of Standards, Washington, D.C., (for sale by
              the Superintendent of Documents)},
 PUBLISHER = {U. S. Government Printing Office, Washington, DC},
      YEAR = {1966},
     PAGES = {xiv+1046},
   MRCLASS = {65.05 (00.20)},
  MRNUMBER = {208798},
}

@book{Evans2010pde,
  AUTHOR = {Evans, Lawrence C.},
     TITLE = {Partial differential equations},
    SERIES = {Graduate Studies in Mathematics},
    VOLUME = {19},
   EDITION = {Second},
 PUBLISHER = {American Mathematical Society, Providence, RI},
      YEAR = {2010},
     PAGES = {xxii+749},
      ISBN = {978-0-8218-4974-3},
   MRCLASS = {35-01},
  MRNUMBER = {2597943},
MRREVIEWER = {Diego\ M.\ Maldonado},
       DOI = {10.1090/gsm/019},
       URL = {https://doi.org/10.1090/gsm/019},
}

@book{Golub2013,
  author    = {Golub, Gene H. and Van Loan, Charles F.},
  title     = {Matrix Computations},
  edition   = {4th},
  year      = {2013},
  publisher = {Johns Hopkins University Press},
  address   = {Baltimore, MD, USA},
}

@article{Hansen2007,
  AUTHOR = {Hansen, Per Christian},
     TITLE = {Regularization {T}ools version 4.0 for {M}atlab 7.3},
   JOURNAL = {Numer. Algorithms},
  FJOURNAL = {Numerical Algorithms},
    VOLUME = {46},
      YEAR = {2007},
    NUMBER = {2},
     PAGES = {189--194},
      ISSN = {1017-1398,1572-9265},
   MRCLASS = {65F22},
  MRNUMBER = {2358249},
MRREVIEWER = {Rosemary\ A.\ Renaut},
       DOI = {10.1007/s11075-007-9136-9},
       URL = {https://doi.org/10.1007/s11075-007-9136-9},
}

@article{Lax1956Stability,
  AUTHOR = {Lax, P. D.},
     TITLE = {A stability theorem for solutions of abstract differential
              equations, and its application to the study of the local
              behavior of solutions of elliptic equations},
   JOURNAL = {Comm. Pure Appl. Math.},
  FJOURNAL = {Communications on Pure and Applied Mathematics},
    VOLUME = {9},
      YEAR = {1956},
     PAGES = {747--766},
      ISSN = {0010-3640,1097-0312},
   MRCLASS = {35.0X},
  MRNUMBER = {86991},
MRREVIEWER = {F.\ Browder},
       DOI = {10.1002/cpa.3160090407},
       URL = {https://doi.org/10.1002/cpa.3160090407},
}

@article{KuttlerSigillito1978Bounding,
   AUTHOR = {Kuttler, J. R. and Sigillito, V. G.},
     TITLE = {Bounding eigenvalues of elliptic operators},
   JOURNAL = {SIAM J. Math. Anal.},
  FJOURNAL = {SIAM Journal on Mathematical Analysis},
    VOLUME = {9},
      YEAR = {1978},
    NUMBER = {4},
     PAGES = {768--778},
      ISSN = {0036-1410},
   MRCLASS = {35P15 (49G05)},
  MRNUMBER = {492948},
MRREVIEWER = {L.\ E.\ Payne},
       DOI = {10.1137/0509056},
       URL = {https://doi.org/10.1137/0509056},
}

@article{Babuskasauter1997Pollution,
  AUTHOR = {Babu{\v{s}}ka, Ivo M. and Sauter, Stefan A.},
     TITLE = {Is the pollution effect of the {FEM} avoidable for the
              {H}elmholtz equation considering high wave numbers?},
      NOTE = {Reprint of SIAM J. Numer. Anal. {\bf 34} (1997), no. 6,
              2392--2423 [MR1480387 (99b:65135)]},
   JOURNAL = {SIAM Rev.},
  FJOURNAL = {SIAM Review},
    VOLUME = {42},
      YEAR = {2000},
    NUMBER = {3},
     PAGES = {451--484},
      ISSN = {1095-7200,0036-1445},
   MRCLASS = {65N30},
  MRNUMBER = {1786934},
       DOI = {10.1137/S0036142994269186},
       URL = {https://doi.org/10.1137/S0036142994269186},
}

@article{IhlenburgBabushka1997HP,
  author  = {Ihlenburg, F. and Babu{\v{s}}ka, I.},
     TITLE = {Finite element solution of the {H}elmholtz equation with high
              wave number. {II}. {T}he {$h$}-{$p$} version of the {FEM}},
   JOURNAL = {SIAM J. Numer. Anal.},
  FJOURNAL = {SIAM Journal on Numerical Analysis},
    VOLUME = {34},
      YEAR = {1997},
    NUMBER = {1},
     PAGES = {315--358},
      ISSN = {0036-1429},
   MRCLASS = {65N30},
  MRNUMBER = {1445739},
MRREVIEWER = {Manfred\ Dobrowolski},
       DOI = {10.1137/S0036142994272337},
       URL = {https://doi.org/10.1137/S0036142994272337},
}

@article{Ainsworth2004Dispersion,
  AUTHOR = {Ainsworth, Mark},
     TITLE = {Discrete dispersion relation for {$hp$}-version finite element
              approximation at high wave number},
   JOURNAL = {SIAM J. Numer. Anal.},
  FJOURNAL = {SIAM Journal on Numerical Analysis},
    VOLUME = {42},
      YEAR = {2004},
    NUMBER = {2},
     PAGES = {553--575},
      ISSN = {0036-1429,1095-7170},
   MRCLASS = {65N30},
  MRNUMBER = {2084226},
MRREVIEWER = {Srinivasan\ Kesavan},
       DOI = {10.1137/S0036142903423460},
       URL = {https://doi.org/10.1137/S0036142903423460},
}

@article{Nayfeh1975Aero,
  author = {Nayfeh, Ali H. and Kaiser, John E. and Telionis, Demetri P.},
    title = {Acoustics of Aircraft Engine-Duct Systems},
    journal = {AIAA Journal},
    volume = {13},
    number = {2},
    pages = {130-153},
    year = {1975},
    doi = {10.2514/3.49654},
}

@book{Carcione2014Seismic,
  title={Wave Fields in Real Media},
    editor = {José M. Carcione},
publisher = {Elsevier},
edition = {Third Edition},
address = {Oxford},
year = {2015},
isbn = {978-0-08-099999-9},
doi = {https://doi.org/10.1016/B978-0-08-099999-9.09991-X},
url = {https://www.sciencedirect.com/science/article/pii/B978008099999909991X}
}

@article{IsakovLu2018,
  AUTHOR = {Isakov, Victor and Lu, Shuai},
     TITLE = {Increasing stability in the inverse source problem with
              attenuation and many frequencies},
   JOURNAL = {SIAM J. Appl. Math.},
  FJOURNAL = {SIAM Journal on Applied Mathematics},
    VOLUME = {78},
      YEAR = {2018},
    NUMBER = {1},
     PAGES = {1--18},
      ISSN = {0036-1399,1095-712X},
   MRCLASS = {35R30 (35J05 78A46)},
  MRNUMBER = {3740331},
MRREVIEWER = {Paul\ Andrew\ Martin},
       DOI = {10.1137/17M1112704},
       URL = {https://doi.org/10.1137/17M1112704},
}

@article{Wang2024Stability,
  AUTHOR = {Wang, Tianjiao and Xu, Xiang and Yuan, Ganghua and Zhao, Yue},
     TITLE = {Stability for the inverse source problem of the {H}elmholtz
              equation with damping},
   JOURNAL = {SIAM J. Appl. Math.},
  FJOURNAL = {SIAM Journal on Applied Mathematics},
    VOLUME = {85},
      YEAR = {2025},
    NUMBER = {4},
     PAGES = {1438--1457},
      ISSN = {0036-1399,1095-712X},
   MRCLASS = {35R30 (35B35 78A46)},
  MRNUMBER = {4932250},
       DOI = {10.1137/24M1675035},
       URL = {https://doi.org/10.1137/24M1675035},
}

@book{Cobbold2006Ultrasound,
    author = {Cobbold, Richard S C},
    title = {Foundations of Biomedical Ultrasound},
    publisher = {Oxford University Press},
    year = {2006},
    month = {09},
    isbn = {9780195168310},
    doi = {10.1093/oso/9780195168310.001.0001},
    url = {https://doi.org/10.1093/oso/9780195168310.001.0001},
}

@article{Millar1973Rayleigh,
  AUTHOR = {Millar, R. F.},
     TITLE = {The {R}ayleigh hypothesis and a related least-squares solution
              to scattering problems for periodic surfaces and other
              scatterers},
   JOURNAL = {Radio Sci.},
  FJOURNAL = {Radio Science},
    VOLUME = {8},
      YEAR = {1973},
     PAGES = {785--796},
      ISSN = {0048-6604,1944-799X},
   MRCLASS = {78.41},
  MRNUMBER = {331989},
MRREVIEWER = {V.\ H.\ Weston},
       DOI = {10.1029/RS008i008p00785},
       URL = {https://doi.org/10.1029/RS008i008p00785},
}

@book{Hafner1990GMT,
  title={The Generalized Multipole Technique for Computational Electromagnetics},
  author={Hafner, Christian},
  year={1990},
  publisher={Artech House},
  address={Boston}
}

@book{Bergman1961Integral,
  AUTHOR = {Bergman, Stefan},
     TITLE = {Integral operators in the theory of linear partial
              differential equations},
    SERIES = {Ergebnisse der Mathematik und ihrer Grenzgebiete [Results in
              Mathematics and Related Areas]},
    VOLUME = {Band 23},
      NOTE = {Second revised printing},
 PUBLISHER = {Springer-Verlag New York, Inc., New York},
      YEAR = {1969},
     PAGES = {x+145},
   MRCLASS = {35.00},
  MRNUMBER = {239239},
}

@book{Gilbert1974Constructive,
   AUTHOR = {Gilbert, Robert P.},
     TITLE = {Constructive methods for elliptic equations},
    SERIES = {Lecture Notes in Mathematics},
    VOLUME = {365},
 PUBLISHER = {Springer-Verlag, Berlin-New York},
      YEAR = {1974},
     PAGES = {vii+397},
   MRCLASS = {35J15 (30A93 30A97 35CXX)},
  MRNUMBER = {447784},
MRREVIEWER = {Manfred\ W.\ Kracht},
}

@book{Vekua1967New,
  AUTHOR = {Vekua, I. N.},
     TITLE = {New methods for solving elliptic equations},
    SERIES = {North-Holland Series in Applied Mathematics and Mechanics},
    VOLUME = {1},
      NOTE = {Translated from the Russian by D. E. Brown},
 PUBLISHER = {North-Holland Publishing Co., Amsterdam; Interscience
              Publishers John Wiley \& Sons, Inc., New York},
      YEAR = {1967},
     PAGES = {xii+358},
   MRCLASS = {35.42},
  MRNUMBER = {212370},
}
\end{document}